\newtheorem{theorem}{Theorem}[section]{\bf}{\it}
\newtheorem{lemma}[theorem]{Lemma}{\bf}{\it}
\newtheorem{proposition}[theorem]{Proposition}{\bf}{\it}
\newtheorem{corollary}[theorem]{Corollary}{\bf}{\it}
{\bf}{\it} 
{\bf}{\it}
\newtheorem*{theorem*}{Theorem}
\newtheorem*{namedtheorem}{\theoremname}
\newcommand{\theoremname}{testing}
\newenvironment{named}[1]{\renewcommand{\theoremname}{#1}\begin{namedtheorem}}{\end{namedtheorem}}
\theoremstyle{remark}
\newtheorem*{remark}{Remark}
\newtheorem{claim}{Claim}
\theoremstyle{definition}
\theoremstyle{remark}
\numberwithin{equation}{section}
\newcommand{\R}{\mathbb R}
\newcommand{\Z}{\mathbb Z}
\newcommand{\loc}{{\operatorname{loc}}}
\newcommand{\id}{{\operatorname{id}}}
\newdimen\vintkern\vintkern11pt
\def\vint{-\kern-\vintkern\int}
\newcommand{\bS}{\mathbb{S}}
\newcommand{\bT}{\mathbb{T}}
\newcommand{\bF}{\mathbb{F}}
\newcommand{\Aut}{\mathrm{Aut}}
\newcommand{\Ker}{\mathrm{Ker}}
\newcommand{\Fix}{\mathrm{Fix}}
\begin{document}

\title{On the non-existence of certain branched covers}
\date{\today}
\author{Pekka Pankka and Juan Souto}

\thanks{The first author has been supported by Academy of Finland project \#1126836 and by NSF grant DMS-0757732.  The second author has been partially supported by NSF grant DMS-0706878, NSF Career award 0952106 and the Alfred P. Sloan Foundation}

\begin{abstract}
We prove that while there are maps $\bT^4\to\#^3(\bS^2\times\bS^2)$ of arbitrarily large degree, there is no branched cover from $4$-torus to $\#^3(\bS^2\times \bS^2)$. More generally, we obtain that, as long as $N$ satisfies a suitable cohomological condition, any $\pi_1$-surjective branched cover $\bT^n \to N$ is a homeomorphism.
\end{abstract}

\subjclass[2000]{57M12 (30C65 57R19)}

\maketitle

\section{Introduction}

In this paper we will prove the following statement.

\begin{theorem}\label{no-sudoku}
There is no branched cover from the 4-dimensional torus $\bT^4$ to $\#^3(\bS^2\times\bS^2)$, the connected sum of three copies of $\bS^2\times\bS^2$. On the other hand, there are, a fortiori $\pi_1$-surjective, maps $\bT^4\to \#^3(\bS^2\times\bS^2)$ of arbitrarily large degree.
\end{theorem}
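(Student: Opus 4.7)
\emph{Non-existence of a branched cover.} My plan is to deduce this from the general result announced in the abstract: any $\pi_1$-surjective branched cover $\bT^n\to N$ is a homeomorphism whenever $N$ satisfies a suitable cohomological hypothesis. The first step is to verify this hypothesis for $N=\#^3(\bS^2\times\bS^2)$; I expect this reduces to a computation with the cohomology ring of $N$, which is generated in degree $2$ by six classes pairing under the intersection form $3H$ on $H^2$, so the condition should be read off directly from this structure. The second step is trivial: since $N$ is simply connected, every map into $N$ is automatically $\pi_1$-surjective. The general theorem then forces any hypothetical branched cover $\bT^4\to N$ to be a homeomorphism, which is impossible because $\pi_1(\bT^4)=\Z^4\ne 0=\pi_1(N)$. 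The main obstacle is therefore the cohomological verification, whose exact form depends on the precise statement of the general theorem.

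\emph{Maps of arbitrarily large degree.} The plan proceeds in two stages. First, produce a single map $g\colon \bT^4\to \#^3(\bS^2\times\bS^2)$ of nonzero degree $d_0$. Second, precompose $g$ with a self-map of $\bT^4$ of degree $k$---for instance $(z_1,z_2,z_3,z_4)\mapsto(z_1^k,z_2,z_3,z_4)$---so that the composition has degree $kd_0$, which is arbitrarily large in $k$.

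\emph{Construction of $g$.} Both $\bT^4$ and $\#^3(\bS^2\times\bS^2)$ carry the intersection form $3H$ on $H^2\cong\Z^6$, so no cohomological obstruction to a degree-one map is present. I plan a cellular construction: after collapsing the $1$-skeleton of $\bT^4$, the $2$-skeleton of the quotient is a wedge of six $2$-spheres $\bS^2_{ij}$ indexed by the pairs $\{i,j\}\subset\{1,2,3,4\}$. Match these with the six $2$-spheres of $\#^3(\bS^2\times\bS^2)$ so that each complementary pair $(\bS^2_{ij},\bS^2_{kl})$ with $\{i,j,k,l\}=\{1,2,3,4\}$ is sent into the two factors of a single $\bS^2\times\bS^2$ summand; this matches the $3H$ forms summand by summand. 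The attaching maps of the $3$-cells of the quotient are null-homologous in the wedge---opposite faces of each $3$-cube cancel---hence null-homotopic by Hurewicz, so the map extends over the $3$-skeleton. The delicate part, and the main technical obstacle, is the extension over the top $4$-cell: one must control the attaching map in $\pi_3$ of the wedge of $2$-spheres and arrange the extension so that the resulting map has degree $1$. Alternatively, one may appeal to a Duan--Wang-type criterion guaranteeing degree-one maps between $4$-manifolds whose intersection forms are compatible.
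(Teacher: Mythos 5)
Your proposal takes essentially the same route as the paper: non-existence follows from the general criterion (Theorem~\ref{thm:main}) because $\dim H^2(\#^3(\bS^2\times\bS^2);\R)=6=\binom{4}{2}=\dim H^2(\bT^4;\R)$, so the hypothesis holds with $r=2$, and since $\#^3(\bS^2\times\bS^2)$ is simply connected any branched cover would be a homeomorphism; and the arbitrarily-large-degree maps come from Duan--Wang, which the paper invokes directly via their Example~4 (the matrix $P=\oplus^3\left(\begin{smallmatrix}0&k\\1&0\end{smallmatrix}\right)$ gives a map of degree $k$), whereas your self-map precomposition trick is only a minor variant. Your hand-made cellular construction of a single nonzero-degree map is, as you acknowledge, incomplete at the top-cell extension (the obstruction sits in $\pi_3$ of the wedge of spheres and controlling the degree there is nontrivial), but since you correctly fall back to the Duan--Wang criterion, the proof is sound.
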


Recall that a branched cover between closed PL-manifolds is a discrete and open PL-map; in the sequel we work always in the PL-category and consider only oriented manifolds and orientation preserving maps; this is no reduction of generality as the torus admits an orientation reversing involution.

Starting with the classical result of Alexander \cite{Alexander}, asserting that every closed, orientable PL-manifold of dimension $n$ admits a branched cover onto the $n$-dimensional sphere $\bS^n$, a series of authors \cite{Berstein-Edmonds-examples,Edmonds-deg3,Hilden,Hirsch,Iori-Piergallini,Montesinos,Piergallini} have proved the existence of branched covers satisfying certain conditions between certain manifolds. Of these results, the one which is perhaps most relevant to this note, is due to Edmonds \cite{Edmonds-deg3}: \emph{Every $\pi_1$-surjective map $f:M\to N$ between closed, orientable 3-manifolds with $\deg(f)\ge 3$ is homotopic to a branched cover}.
Theorem \ref{no-sudoku} shows that Edmonds's theorem fails in dimension 4.

One of the reasons why we find Edmonds's theorem to be rather surprising, is because it asserts that there are many more branched covers than we would na\"ively have expected. A different interpretation of the same theorem is that it is not clear how to distinguish between branched covers and maps of positive degree. We insist on this point. When trying to rule out the existence of branched coverings between two closed manifolds $M$ and $N$, one can perhaps observe that any branched cover $f\colon M\to N$ induces, via pull-back, an injective homomorphism
\begin{equation}\label{eq-injective}
f^*:H^*(N;\R)\to H^*(M;\R)
\end{equation}
between the associated cohomology rings. Although this poses a huge restriction to the existence of branched covers, the same restriction applies for the existence of mere maps of non-zero degree. In fact, it is due to Duan and Wang \cite{Duan-Wang} that, at least when the target $N$ is a simply connected $4$-dimensional manifold, the existence of maps $M\to N$ of positive degree is equivalent to the existence of maps $H^2(N;\Z)\to H^2(M;\Z)$ scaling the intersection form. In the light of Theorem \ref{no-sudoku}, the existence of branched covers is a more subtle issue which does not seem to be detected by any standard cohomological property.  Cohomological considerations play a central r\^ole in the proof of Theorem \ref{no-sudoku}, but they come into our argument in a perhaps unexpected way.
\medskip

\noindent{\bf Quasiregularly elliptic manifolds; the original motivation.} We would like to mention that our interest to investigate the existence of branched covers from $\bT^4$ onto $\#^3(\bS^2\times\bS^2)$ stems from a question of Gromov \cite{GromovM:Hypmga,GromovM:Metsrn} and Rickman \cite{RickmanS:Exiqm}, who asked if the manifolds $\#^k(\bS^2\times\bS^2)$ are {\em $K$-quasiregularly elliptic} for some $K$.

Recall that a continuous mapping $f\colon M\to N$ between oriented Riemannian $n$-manifolds is said to be \emph{$K$-quasiregular}, with {\em (outer) distortion} $K\ge 1$, if $f$ is a Sobolev mapping in $W^{1,n}_\loc(M;N)$ and satisfies the quasiconformality condition
\[
|Df|^n \le K J_f\quad \mathrm{a.e.},
\]
where $|Df|$ is the operator norm of the differential $Df$ and $J_f$ is the Jacobian determinant. A Riemannian manifold of dimension $n$ is \emph{$K$-quasiregularly elliptic} if it admits a non-constant $K$-quasiregular mapping from $\R^n$. We refer to Rickman \cite{RickmanS:Quam} and Bonk-Heinonen \cite{BonkM:Quamc} for a detailed discussion on quasiregular mappings and quasiregularly elliptic manifolds respectively. 

Returning to the quasiregular ellipticity of manifolds $\#^k (\bS^2 \times \bS^2)$, recall that Bonk and Heinonen showed in \cite{BonkM:Quamc} that $K$-quasiregularly elliptic manifolds have quantitatively bounded cohomology. In particular, there exists a bound $k_0=k_0(K)$ depending only on $K$, so that $\#^k(\bS^2\times \bS^2)$ is not $K$-quasiregularly elliptic for $k>k_0$. 

Without a priori bounds on the dilatation $K$, the question on the quasiregular ellipticity of manifolds $\#^k (\bS^2 \times \bS^2)$ is only answered for $k=1,2$. We sketch the argument used in these two cases. Observing that branched covers between closed manifolds are quasiregular, it follows that every manifold $N$ for which there is a branched cover $f:\bT^4\to N$ is quasiregularly elliptic. Considering $\bT^4$ as the product of two 2-dimensional tori and taking on each factor a branched covering $\bT^2\to\bS^2$, one obtains a branched cover $\bT^4\to\bS^2\times\bS^2$ and deduces that the latter manifold is quasiregularly elliptic. What is really much more surprising, is that there is also a branched cover $\bT^4\to\#^2(\bS^2\times\bS^2)$; see \cite{Rickman} for this construction. In particular $\#^2(\bS^2\times\bS^2)$ is also quasiregularly elliptic.

The unfortunate offshot of Theorem \ref{no-sudoku} is that this argument cannot be applied to $\#^k (\bS^2\times \bS^2)$ for $k=3$. Notice that for $k>3$, this observation follows directly from the injectivity of the homomorphism \eqref{eq-injective}. 

\medskip

\noindent{\bf Sketch of the proof of Theorem \ref{no-sudoku}.}
We will deduce the existence of maps $\bT^4\to\#^3(\bS^2\times\bS^2)$ with arbitrarily large degree from the work of Duan and Wang \cite{Duan-Wang}. The non-existence of branched covers in Theorem \ref{no-sudoku} is a special case of the following more general result:

\begin{theorem}
\label{thm:main}
Let $N$ be a closed, connected, and oriented $n$-manifold, $n\ge 2$ so that $\dim H^r(N;\R)=\dim H^r(\bT^n;\R)$ for some $1\le r < n$.
Then every branched cover $\bT^n \to N$ is a cover. In particular, every $\pi_1$-surjective branched cover $\bT^n \to N$ is a homeomorphism.
\end{theorem}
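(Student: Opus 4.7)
My plan is to show that any branched cover $f \colon \bT^n \to N$ is in fact a covering map; the homeomorphism statement under $\pi_1$-surjectivity then follows, since the number of sheets of a connected covering equals $[\pi_1(N) \colon f_*\pi_1(\bT^n)]$.

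The first step is to upgrade the cohomological hypothesis. Since $f$ is a branched cover, $\deg(f) > 0$, and the transfer formula $f_* f^* = \deg(f)\cdot\id$ on $H^*(N;\R)$ shows that $f^*$ is injective in every degree. The hypothesis $\dim H^r(N;\R) = \binom{n}{r} = \dim H^r(\bT^n;\R)$ therefore promotes this to an isomorphism $f^*\colon H^r(N;\R) \xrightarrow{\sim} H^r(\bT^n;\R)$. Poincar\'e duality on both $N$ and $\bT^n$ combined with injectivity gives the analogous isomorphism in degree $n-r$, and in degree $n$ the map $f^*$ is multiplication by $\deg(f)$, so is an isomorphism as well.

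The heart of the argument should exploit the exterior-algebra structure $H^*(\bT^n;\R) \cong \Lambda^*\R^n$ together with the fact that $f^*$ is a ring homomorphism. Each decomposable class $dx_{i_1}\wedge\cdots\wedge dx_{i_r} \in H^r(\bT^n;\R)$ is the pullback of some class $\eta_I \in H^r(N;\R)$; similarly each complementary $dx_{j_1}\wedge\cdots\wedge dx_{j_{n-r}}$ is the pullback of some $\zeta_{I^c} \in H^{n-r}(N;\R)$; and because $f^*$ intertwines the top-dimensional pairings up to a factor of $\deg(f)$, the cup product $\eta_I \cup \zeta_{I^c}$ is forced to equal $\pm\deg(f)^{-1}$ times the fundamental cohomology class of $N$. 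I would use this rich family of classes together with the restriction of $f$ to the genuine cover $\bT^n \setminus f^{-1}(V) \to N \setminus V$, where $V = f(B_f)$ has codimension at least two, to extract a cohomological obstruction to a nonempty branch locus.

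The principal obstacle I anticipate is precisely this last step: translating cohomological rigidity in a single degree $r$ into the geometric assertion that the branch set is empty. A PL branched cover can have intricate local structure in high dimensions, and the branch image $V$ is invisible to low-degree real cohomology because it has codimension at least two, so one cannot simply read off $B_f = \emptyset$ from the cohomology of $N$. I would try to make the step precise by analyzing how the pulled-back classes $\eta_I$ behave near $V$, computing integrals of the products $\eta_I \wedge \zeta_{I^c}$ after pulling them back through $f$ (using that the Jacobian $J_f \ge 0$ vanishes precisely on $B_f$), and exhibiting a contradiction with the isomorphism in degree $r$ whenever $V$ is nonempty.
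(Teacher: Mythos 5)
Your opening reductions are correct and coincide with the paper's first step: the transfer formula makes $f^*$ injective, so the hypothesis $\dim H^r(N;\R)=\binom{n}{r}$ forces $f^*$ to be an isomorphism in degree $r$, and Poincar\'e duality propagates this to degree $n-r$. But you honestly flag that everything after that is a gap, and the strategy you sketch for closing it --- analyzing Jacobian integrals of pulled-back forms near the branch image $V=f(B_f)$ --- is unlikely to work. Since $V$ has codimension at least two it is invisible to real cohomology, and the pointwise condition $J_f\ge 0$ with equality on the singular set does not by itself distinguish a branched cover from any degree-$d$ map; moreover branched covers with nonempty branch set certainly exist (e.g.\ $\bT^4\to\#^2(\bS^2\times\bS^2)$), so the branch locus cannot be excluded by a local or measure-theoretic obstruction. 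The cohomological hypothesis has to be exploited globally, not near $B_f$.

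The paper's argument rests on three ideas absent from your sketch. First, the Berstein--Edmonds factorization: there are a polyhedron $X$ and finite groups $H\subset G$ of automorphisms with $\bT^n=X/H$, $N=X/G$, and $f$ the induced orbit map. Second, the degree-$r$ isomorphism is translated into equivariance: the cup-product isomorphism $\bigwedge^r\bigl(H^1(X;\R)^H\bigr)\cong H^r(X;\R)^H=H^r(X;\R)^G$, together with a rigidity lemma (a linear map inducing $\id$ on $\bigwedge^r W$ is $\pm\id$ on $W$), shows each $g\in G$ acts on $H^1(X;\R)^H$ by $\pm\id$; this yields a sign character $\delta\colon G\to\{\pm1\}$ with $H\subset\Ker\delta$. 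This is where the exterior-algebra structure you wanted to use actually enters --- but via linear rigidity, not via the pairing $\eta_I\cup\zeta_{I^c}$. Third, and decisively, one averages representative cocycles to build an equivariant Abel--Jacobi map $\Psi\colon X\to\R^n/\Z^n$ semi-conjugating the $G$-action to an affine action on the torus and descending to a homotopy equivalence $\bT^n\to\R^n/\Z^n$. This factors $f$ as a genuine cover $\bT^n\to X/K$ (with $K=\Ker\delta$) followed by a map of degree at most two, and the degree-two case is killed by a Lefschetz fixed-point plus Smith-theory contradiction (the putative involution acts by $-\id$ on $H^1$ of a homotopy torus, so must have fixed points, yet its fixed set would have to be $\Z_2$-acyclic, which is impossible for an orientation-preserving PL involution on a manifold of dimension $\ge 3$). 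None of this machinery is anticipated in your proposal, so the gap is a missing method, not a missing detail.
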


Suppose that $f\colon \bT^n\to N$ is a branched cover as in the statement of Theorem \ref{thm:main}. By a result of Berstein-Edmonds \cite{Berstein-Edmonds}, there are a compact polyhedron $X$, a finite group $G$ of automorphisms of $X$, and a subgroup $H$ of $G$, such that $N=X/G$, $\bT^n=X/H$ and such that the map $f$ is just the orbit map $X/H\to X/G$. Our first goal is to prove that the group $G$ acts on the $H$-invariant cohomology $H^*(X;\R)^H$ of $X$; this is the content of section \ref{sec:homology} below. 

In section \ref{sec:averaging}, we use the action of $G$ on $H^1(X;\R)^H$ to construct, by averaging, cocycles $\Theta_1,\ldots,\Theta_n$ which form a basis of $H^1(X;\R)^H$ and behave well with respect to the action of $G$ on $C^1(X;\R)$, the vector space of $1$-cochains on $X$. 

These cocycles give rise to a map $X\to\R^n/\Z^n$ which semi-conjugates the action of $G$ on $X$ to a certain action on $\R^n/\Z^n$. This map is defined in the same way as the classical Abel-Jacobi map and this is the way we will refer to it below. The reader can find the construction and some properties of the Abel-Jacobi map in section \ref{sec:abel-jacobi} and section \ref{sec:pushdown}. The Abel-Jacobi map and its $G$-equivariance properties, are the key tools to show that the map $f\colon \bT^n\to N$ factors as $\bT^n\to N'\to N$ where the first arrow is a cover and the second arrow has at most degree 2; moreover, $N'$ is homotopy equivalent to a torus. 

In section \ref{sec:final} we will conclude the proof of Theorem \ref{thm:main} showing that the branched cover $N'\to N$ is a homeomorphism. In order to do so, we have to rule out that it has degree 2. Every degree 2 branched cover is regular, meaning that there is an involution $\sigma:N'\to N'$ such that $N=N/\langle\sigma\rangle$ and that the map $N'\to N=N'/\langle\sigma\rangle$ is the orbit map. At this point we will already know that the involution $\sigma$ acts as $-\id$ on $H_1(N';\R)\simeq\R^n$. The Lefschetz fixed-point theorem implies that  the fixed-point set $\Fix(\sigma)$ is not empty. The final contradiction is obtained when we show, using Smith theory, that $\Fix(\sigma)$ is in fact empty.

Once Theorem \ref{thm:main} is proved, we discuss Theorem \ref{no-sudoku} in section \ref{sec:no-sudoku}. 
\medskip

The first author would like to thank Seppo Rickman for frequent discussions on these topics and encouragement. The second author would like to express his gratitude to the Department of Mathematics and Statistics of the University of Helsinki for its hospitality. The second author also wishes to dedicate this paper to {\em Paul el pulpo}.

\section{Preliminaries}\label{sec:preli}

In this section we remind the reader of a few well-known facts and definitions. Throughout this note we will work in the PL-category. We refer the reader to \cite{Hudson, Rourke-Sanderson} for basic facts on PL-theory. 

\subsection{Chains, cochains, and homology}
Suppose that $X$ is a compact {\em polyhedron}, i.e. the geometric realization of a finite simplicial complex and let $R$ be either $\R$ or $\Z$. We denote by $C_k(X;R)$,  $Z_k(X;R)$, $C^k(X;R)$ and $Z^k(X;R)$ the $R$-modules of singular $k$-chains, $k$-cycles, $k$-cochains, and $k$-cocycles with coefficients in $R$. The associated homology and cohomology groups are denoted, as always, by $H_k(X;R)$ and $H^k(X,R)$. The homology (resp. cohomology) class represented by a chain (resp. cochain) $\alpha$ will be denoted by $[\alpha]$.

If $\phi\colon X\to Y$ is a continuous map to another polyhedron $Y$. We denote by $\phi_\# \colon C_k(X;R)\to C_k(Y;R)$ and by $\phi^\# \colon C^k(Y;R) \to C^k(X;R)$ the push-forward and pull-back of chains and cochains, respectively. On homological level, we denote the push-forward and pull-back by $\phi_*:H_*(X;R)\to H_*(Y;R)$ and $\phi^*\colon H^*(Y;R)\to H^*(X;R)$ respectively. 

Finally, denote by
$$\langle\cdot,\cdot\rangle:C^k(X;R)\times C_k(X;R)\to R$$
the canonical evaluation form. Notice that, with the same notation as above, we have 
\begin{equation}\label{eq-bla}
\langle\phi^\#(\omega),c\rangle=\langle\omega,\phi_\#(c)\rangle
\end{equation}
for $\omega\in C^k(Y;R)$ and $c\in C_k(X;R)$. The induced bilinear form on $H^k(X;R)\times H_k(X;\R)$ is also denoted by $\langle\cdot,\cdot\rangle$; again \eqref{eq-bla} is satisfied
\begin{equation}\label{eq-bla32}
\langle\phi^*([\omega]),[c]\rangle=\langle[\omega],\phi_*([c])\rangle.
\end{equation}
Recall that if $c$ and $\omega$ are a chain and a cochain representing the homology and cohomology classes $[c]$ and $[\omega]$ we have $\langle\omega,c\rangle=\langle[\omega],[c]\rangle$.

Continuing with the same notation, suppose that $G$ is a finite group acting on the polyhedron $X$. We denote by $X/G$ the quotient space and by 
$$\pi_G:X\to X/G$$
the orbit map. The $G$-invariant real cohomology 
$$H^*(X;\R)^G=\{\omega\in H^*(X;\R)\vert g^*(\omega)=\omega\ \hbox{for all}\ g\in G\}$$ 
of $X$ is isomorphic to the real cohomology $H^*(X/G;\R)$ of the orbit space $X/G$. Given that this fact will be used over and over again, we state it as a proposition:

\begin{proposition}\label{prop:invco}
Let $X$ be a polyhedron and $G$ a finite group acting on $X$. The pull-back of the orbit map $\pi_G \colon X\to X/G$ induces an isomorphism between $H^*(X;\R)^G$ and $H^*(X/G;\R)$.\qed
\end{proposition}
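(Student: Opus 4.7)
The plan is to combine two standard principles: an equivariant simplicial model making $\pi_G^\#$ tautological on cochains, together with averaging, which in characteristic zero identifies the cohomology of $G$-invariant cochains with the $G$-invariant cohomology.

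First, after barycentric subdivision I would arrange that $G$ acts simplicially on $X$ with the property that any element fixing a simplex setwise also fixes it pointwise. Under this hypothesis $X/G$ inherits a canonical simplicial structure in which each $k$-simplex is the image under $\pi_G$ of a $G$-orbit of $k$-simplices of $X$. Consequently the simplicial cochain pull-back
$$\pi_G^\#\colon C^*_{\mathrm{simp}}(X/G;\R)\longrightarrow C^*_{\mathrm{simp}}(X;\R)^G$$
is an isomorphism of cochain complexes, since a cochain on $X/G$ is tautologically the same data as a $G$-invariant real-valued function on the simplices of $X$. Passing to cohomology gives $H^*(X/G;\R)\cong H^*(C^*_{\mathrm{simp}}(X;\R)^G)$.

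Next I would identify $H^*(C^*_{\mathrm{simp}}(X;\R)^G)$ with $H^*(X;\R)^G$ using the averaging operator
$$\mathrm{Av}\colon C^*_{\mathrm{simp}}(X;\R)\to C^*_{\mathrm{simp}}(X;\R)^G,\qquad \mathrm{Av}(\omega)=\frac{1}{|G|}\sum_{g\in G}g^\#\omega,$$
which is a chain-map retraction onto the invariant subcomplex. For surjectivity, given a class $[\omega]\in H^*(X;\R)^G$, each $g^\#\omega$ is cohomologous to $\omega$, so $\mathrm{Av}(\omega)$ is a $G$-invariant cocycle representing $[\omega]$. For injectivity, if a $G$-invariant cocycle $\omega$ equals $d\eta$ in the full complex, then $\omega=\mathrm{Av}(\omega)=d(\mathrm{Av}(\eta))$ with $\mathrm{Av}(\eta)$ itself $G$-invariant, so $\omega$ is already a coboundary inside the invariant subcomplex.

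Composing the two isomorphisms yields $H^*(X/G;\R)\cong H^*(X;\R)^G$, and tracing through the definitions shows that it is induced by $\pi_G^*$. The only point that genuinely needs to be handled with care is the equivariant simplicial model ensuring that simplices of $X/G$ correspond to $G$-orbits of simplices of $X$; this is a classical fact about finite group actions on polyhedra and must be invoked to make the cochain identification available. Everything else is formal and ultimately rests on the fact that $|G|$ is invertible in $\R$.
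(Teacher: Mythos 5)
Your argument is correct in outline, but it takes a genuinely different route from the paper. The paper invokes the \emph{transfer homomorphism} $H_*(X/G;\R)\to H_*(X;\R)$ (citing Bredon), which at the level of singular cohomology immediately furnishes a one-sided inverse to $\pi_G^*$ scaled by $\vert G\vert$ and hence the isomorphism onto the invariants. You instead build an explicit equivariant simplicial model so that $\pi_G^\#$ becomes a tautological cochain isomorphism onto the $G$-invariant subcomplex, and then identify the cohomology of the invariant subcomplex with the invariant cohomology via averaging. Both proofs are standard and both ultimately rest on the invertibility of $\vert G\vert$ in $\R$; yours is more hands-on and self-contained, while the paper's is shorter because it delegates the combinatorial work to the transfer. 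Your averaging step is carried out correctly: surjectivity because each $g^\#\omega$ is cohomologous to $\omega$, injectivity because $\mathrm{Av}$ is a retraction of cochain complexes.

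One point of genuine imprecision: the regularity condition you state for the equivariant triangulation, that ``any element fixing a simplex setwise also fixes it pointwise'' (often called acting \emph{without inversions}, or \emph{rigidly}), is \emph{not} sufficient for the quotient to inherit a simplicial structure in which $\pi_G$ is simplicial and simplices of $X/G$ are orbits of simplices of $X$. For instance, $\Z/3$ rotating the boundary of a $2$-simplex is rigid, yet $\pi_G$ wraps each edge around the quotient circle and the quotient is not a simplicial complex. What you actually need is the stronger admissibility condition that whenever $v$ and $g(v)$ are vertices of a common simplex, $g(v)=v$; this is what is achieved, for any simplicial action of a finite group, after passing to the \emph{second} barycentric subdivision. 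With that correction the rest of the argument goes through as you describe, and the chain of isomorphisms is indeed induced by $\pi_G^*$ once one checks the usual $G$-equivariant comparison between simplicial and singular cohomology.
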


Proposition \ref{prop:invco} is a consequence of the natural \emph{transfer homomorphism} $H_*(X/G;\R)\to H_*(X;\R)$, see e.g. \cite[III.2]{Bredon}. Below we will need the following closely related fact, whose proof we leave to the reader:

\begin{lemma}\label{lem:transfer}
Let $X$ be a polyhedron, $G$ a finite group acting on $X$ and suppose that $c$ is a $k$-chain whose push-forward $(\pi_G)_\#(c)$ under the orbit map $\pi_G:X\to X/G$ is a $k$-cycle in $X/G$. Then $\sum_{g\in G}g_\#(c)$ is a $k$-cycle in $X$.

Moreover, if $[\omega]$ is a $k$-cohomology class in $X/G$ then 
$$\left\langle\pi_G^*([\omega]),\left[\sum_{g\in G}g_\#(c)\right]\right\rangle=\vert G\vert\langle[\omega],[(\pi_G)_\#(c)]\rangle.$$
\qed
\end{lemma}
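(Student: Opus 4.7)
The plan is to combine a short orbit-sum computation with the naturality of the evaluation pairing. Write $\tilde c := \sum_{g\in G} g_\# c$ for brevity.

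For the first assertion, I would first observe that $\tilde c$ is $G$-invariant, since for any $h\in G$ we have $h_\# \tilde c = \sum_{g\in G} (hg)_\# c = \tilde c$. Using $\pi_G\circ g = \pi_G$ for each $g\in G$, the push-forward becomes
$$(\pi_G)_\#\tilde c \;=\; \sum_{g\in G}(\pi_G\circ g)_\# c \;=\; |G|\,(\pi_G)_\# c,$$
which is a cycle in $X/G$ by hypothesis. Therefore $(\pi_G)_\# \partial\tilde c = \partial(\pi_G)_\#\tilde c = 0$, while $\partial\tilde c$ is itself $G$-invariant. The first claim thus reduces to the chain-level statement that \emph{a $G$-invariant chain whose push-forward under $\pi_G$ vanishes is itself zero}. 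This can be verified by choosing a $G$-equivariant simplicial subdivision of $X$ so that $\partial\tilde c$ becomes simplicial: $G$-invariance forces the chain to have a single real coefficient on each $G$-orbit of simplices (stabilizers act trivially on orbits because the $G$-action is orientation preserving), and $(\pi_G)_\#$ records that coefficient weighted by the orbit size, so vanishing of the push-forward over $\R$ forces the chain to vanish. More conceptually, one can apply the transfer chain map $\tau\colon C_*(X/G;\R)\to C_*(X;\R)$ of \cite[III.2]{Bredon} that underlies Proposition \ref{prop:invco}: as $\tau$ is a chain map satisfying $\tau\circ(\pi_G)_\# = \sum_{g\in G} g_\#$, the identity $\tilde c = \tau((\pi_G)_\# c)$ immediately promotes the downstairs cycle property to the upstairs one.

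For the second assertion, the identity $(\pi_G)_*[\tilde c] = |G|\,[(\pi_G)_\# c]$ obtained above combines with the naturality formula \eqref{eq-bla32} to yield
$$\bigl\langle \pi_G^*[\omega],\,[\tilde c] \bigr\rangle \;=\; \bigl\langle [\omega],\,(\pi_G)_*[\tilde c] \bigr\rangle \;=\; |G|\,\bigl\langle [\omega],\,[(\pi_G)_\# c] \bigr\rangle,$$
which is the required equality.

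The only delicate point in the plan is the chain-level step in the first part. It requires either a direct analysis on a sufficiently fine $G$-equivariant triangulation together with a brief check that orientation preservation rules out stabilizer-induced sign cancellations, or an appeal to the transfer chain map. Everything else is a direct manipulation of the definitions of push-forward and of the evaluation pairing.
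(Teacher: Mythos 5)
Your handling of the second assertion is fine: from $(\pi_G)_\#\bigl(\sum_{g\in G}g_\# c\bigr)=|G|\,(\pi_G)_\# c$ the stated equality follows at once from \eqref{eq-bla32}.

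The first assertion is where the gap is. You correctly reduce it to: a $G$-invariant chain with vanishing push-forward is zero, applied to $\partial\tilde c=\sum_{g}g_\#\partial c$. But for \emph{singular} chains of degree $\ge 1$ this is false whenever the action is not free, because two singular simplices $\sigma,\sigma'\colon\Delta^m\to X$ can satisfy $\pi_G\circ\sigma=\pi_G\circ\sigma'$ without lying in the same $G$-orbit. Take $X=[-2,2]$ with $G=\Z/2$ acting by $x\mapsto-x$, $\sigma(t)=\sin(2\pi t)$ and $\sigma'(t)=|\sin(2\pi t)|$: these project to the same loop in $X/G$ but lie in distinct orbits, so $(\sigma+g_\#\sigma)-(\sigma'+g_\#\sigma')$ is a nonzero $G$-invariant $1$-chain with zero push-forward, and it bounds because $X$ is contractible. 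This undercuts both of your justifications: singular simplices need not be simplices of any $G$-triangulation, so there is no orbit-by-orbit coefficient to count; and no chain map $\tau\colon C_*(X/G;\R)\to C_*(X;\R)$ can satisfy $\tau\circ(\pi_G)_\#=\sum_g g_\#$, since it would send the above chain to $\tau(0)=0$. The side remark that orientation preservation trivializes stabilizer contributions is also not available, as the lemma assumes neither that $X$ is a manifold nor that the action respects any orientation. What does work --- and it covers every use of the lemma in the paper --- is the case $k=1$: then $\partial c$ is a $0$-chain, and for points $\pi_G(x)=\pi_G(y)$ really does force $y\in Gx$. Grouping $\partial c$ by $G$-orbits, the hypothesis $(\pi_G)_\#\partial c=0$ makes each orbit's coefficient sum vanish, and since $\sum_{g\in G}g_\#[x]=|G_x|\sum_{y\in Gx}[y]$ with $|G_x|$ constant along the orbit, one gets $\sum_g g_\#\partial c=0$. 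For general $k$ you would have to replace singular chains by a model (e.g.\ simplicial chains of a $G$-equivariant triangulation) in which $\pi_G$ is injective on orbits of simplices, and argue there.
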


We refer the reader to \cite{Bredon,Bredon-top,Hatcher} for basic facts of algebraic topology.

\subsection{Branched covers}
A {\em branched cover} between oriented PL-manifolds is an orientation preserving discrete and open PL-map. Given any such branched cover $f\colon M\to N$ we say that $x\in M$ is a \emph{singular point of $f$} if $f$ is not a local homeomorphism in a neighborhood of $x$. The image $f(x)$ of a singular point is a {\em branching point} of $f$. We denote by $S_f\subset M$ and $B_f=f(S_f)\subset N$ the sets of singular points and branch points of $f$ respectively. We also denote by $Z_f=f^{-1}(B_f)$ the pre-image of the set of branching points of $f$; notice that $S_f\subset Z_f$. 

Before moving on, observe that since $f$ is PL, discrete and open, the sets $S_f,B_f$ and $Z_f$ are polyhedra of codimension at least $2$. Remark also that the restriction of $f$ to $M\setminus Z_f$ is a covering map onto $N\setminus B_f$. 

\begin{remark}
It is worth noticing that the above statement does not require the mapping to be PL. Indeed, given a discrete and open mapping between $n$-manifolds the singular set has topological codimension at least $2$ by a result of {\v{C}}ernavski{\u{\i}}-V\"ais\"al\"a \cite{CernavskiA:Ftoomm,VaisalaJ:Disomm}. Moreover, $B_f$ and $S_f$ have the same (topological) dimension.
\end{remark}

A branched cover $f\colon M\to N$ is said to be {\em regular} if there is a group $G$ of PL-automorphisms of $M$ such that there is an identification of $N$ with $M/G$ in such a way that the map $f$ becomes the orbit map $\pi_G:M\to M/G=N$. The following result, due to Berstein and Edmonds \cite[Proposition 2.2]{Berstein-Edmonds}, asserts that every branched cover is a, non-regular, orbit map:

\begin{proposition}\label{polyhedron}
Given a branched cover $f\colon M\to N$ between closed orientable manifolds of dimension $n$, there exists a connected polyhedron $X_f$ of dimension $n$, a group $G$ of automorphisms of $X_f$, a subgroup $H$ of $G$ and identifications $M=X_f/H$ and $N=X_f/G$ so that the following diagram commutes:
\[
\xymatrix{
& X_f \ar[dl]_{\pi_H} \ar[dr]^{\pi_G} & \\
X_f/H=M \ar[rr]_f & & N=X_f/G
}
\]
Moreover, $H^n(X_f,\R)\simeq\R$ and $G$ acts trivially on $H^n(X_f,\R)$.
\end{proposition}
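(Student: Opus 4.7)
The plan is to realize $X_f$ as the ``Galois closure'' of the branched cover $f$, in complete analogy with the Galois closure of a field extension, and then to carry out a Fox-type PL completion across the branch locus.

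First I would set up the covering-theoretic data. Since $B_f\subset N$ and $Z_f\subset M$ are subpolyhedra of codimension at least $2$, the restriction $f_0\colon M\setminus Z_f\to N\setminus B_f$ is an honest $d$-fold covering, with $d=\deg f$. This covering corresponds to a conjugacy class of index-$d$ subgroups $H_0<\Gamma:=\pi_1(N\setminus B_f,x_0)$. Let $K$ be the kernel of the $\Gamma$-action by permutation on the finite set $\Gamma/H_0$; equivalently, the intersection of the $\Gamma$-conjugates of $H_0$. Then $K\triangleleft\Gamma$ has finite index; put $G:=\Gamma/K$ and $H:=H_0/K$. Let $\tilde X_0\to N\setminus B_f$ be the regular covering corresponding to $K$. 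By standard covering space theory, $G$ acts freely, properly, and PL-ly on $\tilde X_0$ with orbit space $N\setminus B_f$, and $\tilde X_0/H$ is the intermediate cover corresponding to $H_0$, hence PL-homeomorphic to $M\setminus Z_f$.

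Next I would PL-complete $\tilde X_0$ to a compact polyhedron $X_f$ on which $G$ acts by automorphisms. Choose a triangulation of $N$ with respect to which $B_f$ is a subcomplex. For every simplex $\tau$ of $N$ meeting $B_f$, the restriction of $f$ to the star of $\tau$ is PL-equivalent to the cone on the restriction of $f_0$ to its link, so $M$ itself can be recovered from $M\setminus Z_f$ by coning off, one cell at a time, certain subcomplexes living in the links. The same recipe applied to $\tilde X_0$ yields a compact polyhedron $X_f$: for each simplex $\tau\subset N$ with $\tau\cap B_f\ne\emptyset$ and each lift of the link to $\tilde X_0$, glue on a single cone. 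Since $G$ permutes the lifts of any link in $\tilde X_0$, the $G$-action extends PL-ly to $X_f$, and by construction the quotient maps realize $X_f/G=N$ and $X_f/H=M$ with $f=\pi_G\circ\pi_H^{-1}$. If $X_f$ fails to be connected, $G$ acts transitively on its components (since $N\setminus B_f$ is connected), so replacing $X_f$ by one component and $G,H$ by its stabilizer and the intersection with the stabilizer gives the desired connected model.

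Finally, the cohomological claims. By construction, $X_f$ contains the open dense oriented $n$-manifold $\tilde X_0$ whose complement has codimension at least $2$; hence $X_f$ is an oriented PL pseudo-manifold of dimension $n$, for which $H^n(X_f;\R)\cong\R$ is generated by the pullback $\pi_G^*[\omega_N]$ of a top class on $N$. Moreover, the regular cover $\tilde X_0\to N\setminus B_f$ is orientation-preserving, so every $g\in G$ restricts to an orientation-preserving PL self-homeomorphism of $\tilde X_0$; since the fundamental class of the pseudo-manifold $X_f$ is determined by the orientation on this dense open manifold, $g^*$ acts trivially on $H^n(X_f;\R)$.

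The main technical obstacle will be the PL completion step, i.e.\ ensuring that the free $G$-action on $\tilde X_0$ extends to a PL $G$-action on a \emph{compact} polyhedron with quotient $N$. Everything else is formal once this Fox-type completion is carried out, and it is precisely the content of Berstein--Edmonds' construction cited in the statement.
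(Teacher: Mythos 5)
Your proposal reproduces the approach of the cited Berstein--Edmonds reference: pass to the ``Galois closure'' of the unbranched part, i.e.\ the regular covering of $N\setminus B_f$ corresponding to the normal core $K$ of the monodromy subgroup $H_0\subset\pi_1(N\setminus B_f)$, and then Fox-complete in the PL category so that the finite deck group $G=\Gamma/K$ extends over the branch locus, with $X_f/H\cong M$ by uniqueness of the Fox completion. You correctly isolate the PL completion as the real technical content (which is precisely what the paper delegates to Berstein--Edmonds), and the concluding pseudomanifold argument for $H^n(X_f;\R)\cong\R$ and the orientation-preserving (hence trivial) action of $G$ on it is sound; one minor remark is that the disconnectedness caveat is unnecessary, since $\tilde X_0$ is already connected and the completion only adjoins points.
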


We observe that in general the polyhedron $X_f$ is not a manifold. However, the complement of $\pi_G^{-1}(B_f)=\pi_H(Z_f)$ in $X_f$ is a manifold and the restriction of $\pi_G$ and $\pi_H$ to that set is a covering map. Just to clarify terminology, we would like to mention that under a {\em cover} or {\em covering map} we understand, as usual, a branched cover whose singular set is empty. 

Before going any further we state, with the same notation as in Proposition \ref{polyhedron}, the following simple consequence of the fact that $H^n(X_f;\R)=\R$ and that $G$ acts trivially on $H^n(X_f;\R)$. If $K$ is namely a subgroup of $G$ then $H^n(X_f/K;\R)\simeq\R$ by Proposition \ref{prop:invco}. Hence, if $V_1,V_2$ are oriented $n$-manifolds then any two continuous maps
$$f_1:V_1\to X_f/K,\ f_2:X_f/K\to V_2$$
have a well-defined degree $\deg(f_i)$ which satisfies the usual rule 
$$\deg(f_2\circ f_2)=\deg(f_2)\deg(f_1).$$
This fact will be of some importance below.

\subsection{Branched covers of degree 1 or 2}
Below, we will show that any branched cover $f:\bT^n\to N$ as in the statement of Theorem \ref{thm:main} is the composition of a covering map and a branched cover of degree one or two. It is well-known that such branched covers are very particular:

\begin{proposition}\label{prop:deg12}${ }$

\begin{enumerate}
\item A branched cover of degree 1 is a homeomorphism.
\item Let $f:M\to N$ be a degree 2 branched cover between closed, connected, orientable PL-manifolds. Then there is an orientation preserving involution $\sigma:M\to M$ such that $N=M/\langle\sigma\rangle$ and such that the map $f$ is just the natural orbit map $M\to M/\langle\sigma\rangle=N$.\qed
\end{enumerate}
\end{proposition}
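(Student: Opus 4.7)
The plan is to show directly that a degree-$1$ branched cover $f:M\to N$ is a bijection; then continuity of $f$ together with compactness of $M$ and Hausdorffness of $N$ will upgrade this to a homeomorphism. For any $y\in N\setminus B_f$ we already have $|f^{-1}(y)|=\deg(f)=1$. To exclude $y\in B_f$ with $|f^{-1}(y)|\ge 2$, pick $x_1\ne x_2\in f^{-1}(y)$ and disjoint open neighborhoods $U_1,U_2$; the set $V=f(U_1)\cap f(U_2)$ is open by openness of $f$, and any $y'\in V\setminus B_f$ would have pre-images in each $U_i$, contradicting $|f^{-1}(y')|=1$.

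\textbf{Part (2).} Over the regular part, $f$ restricts to a $2$-sheeted covering $M\setminus Z_f\to N\setminus B_f$. Since $Z_f$ is a polyhedron of codimension at least $2$ in the connected manifold $M$, the space $M\setminus Z_f$ is connected, so the covering is normal with deck group $\Z/2$; let $\sigma_0$ be the non-trivial deck involution. I would then extend $\sigma_0$ pointwise to $\sigma\colon M\to M$: for $x\in Z_f$, the local degrees of $f$ at the points of $f^{-1}(f(x))$ sum to $2$, so either $f^{-1}(f(x))=\{x\}$ (in which case set $\sigma(x)=x$) or $f^{-1}(f(x))=\{x,x'\}$ with both points having local degree $1$ (in which case set $\sigma(x)=x'$).

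The main step, and the only real obstacle, is verifying that this pointwise extension is continuous (and PL) at points of $Z_f$; away from $Z_f$ there is nothing to check. Fix $x\in Z_f$ and a neighborhood basis $\{U_k\}$ of $x$ with $f(U_k)$ shrinking to $f(x)$. In the first case, any sequence $z_k\to x$ in $M\setminus Z_f$ satisfies $f(\sigma_0(z_k))=f(z_k)\to f(x)$, so by compactness every subsequential limit of $\sigma_0(z_k)$ lies in $f^{-1}(f(x))=\{x\}$; hence $\sigma_0(z_k)\to x$ and $\sigma$ is continuous at $x$ with value $x$. In the second case, sufficiently small neighborhoods $U\ni x$ and $U'\ni x'$ both map homeomorphically onto $f(U)=f(U')$, and $\sigma_0$ is exactly the fiberwise swap $U\setminus Z_f\leftrightarrow U'\setminus Z_f$, which extends continuously to the PL homeomorphism $U\leftrightarrow U'$. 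Once continuity is in place, $\sigma$ is an involution by construction and satisfies $f\circ\sigma=f$, so $f$ factors through a continuous bijection $M/\langle\sigma\rangle\to N$, which is a homeomorphism by the compact-to-Hausdorff principle. Finally, $\deg(f)=\deg(\sigma)\deg(f)$ forces $\deg(\sigma)=+1$, so $\sigma$ is orientation preserving.
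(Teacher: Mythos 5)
The paper states Proposition~\ref{prop:deg12} without proof (it carries a \textnormal{\qedsymbol} as a folklore fact), so there is no argument of the authors to compare against; on its own merits your proof is correct and is the standard one. A few small points worth recording. In Part~(1) you pass from injectivity to ``bijection'' without noting surjectivity of $f$; this is needed both here and to see that $M\setminus Z_f\to N\setminus B_f$ is onto, and follows since $f(M)$ is open (by openness of $f$) and compact, hence all of the connected $N$. In Part~(2), Case~1, you check continuity of $\sigma$ at $x$ only along sequences in $M\setminus Z_f$; the identical compactness argument applies to arbitrary sequences $z_k\to x$ once one notes $f\circ\sigma=f$ holds on all of $M$ by construction, so this is merely an expository slip. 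Finally, at points of local degree $2$ your argument gives $\sigma$ continuous but not visibly PL; since the paper invokes $\sigma$ only through the Lefschetz fixed-point theorem and Smith theory (Proposition~\ref{final-prop} and Proposition~\ref{fix-set}), which are topological, this does not affect the application, but if one insists on $\sigma$ being a PL involution (so that $M\to M/\langle\sigma\rangle$ is literally a PL orbit map) an additional argument would be required there.
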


In the statement of Proposition \ref{prop:deg12}, as well as in the future, we denote by $\langle\sigma\rangle$ the group generated by $\sigma$. 

We conclude this section with the following observation on the fixed point sets of involutions:

\begin{proposition}\label{fix-set}
Let $M$ be a closed oriented $n$-manifold, $n\ge 3$, and $\sigma\colon M\to M$ an orientation preserving involution such that $M/\langle\sigma\rangle$ is a manifold. Either 
$\sigma$ is fixed-point free or given a component $\Sigma$ of the fixed point set $\Fix(\sigma)$ of $\sigma$, there exists $1\le r\le n-2$ so that $H^r(\Sigma;\Z_2)=\Z_2$. In particular, the components of $\Fix(\sigma)$ are not $\Z_2$-acyclic. 
\end{proposition}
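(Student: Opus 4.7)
The strategy is to show that each non-empty component $\Sigma$ of $\Fix(\sigma)$ is a closed $\Z_2$-homology submanifold of $M$ of codimension exactly $2$, and then apply Poincar\'e duality with $\Z_2$-coefficients.

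First I would analyse the local picture at a point $p\in\Fix(\sigma)$. Choose a $\sigma$-invariant PL regular neighbourhood $U$ of $p$; then $U$ is a cone on its link $L_p$, which is a PL $(n-1)$-sphere, and $U/\sigma$ is a cone on the quotient $L_p/\sigma$. Since by hypothesis $M/\langle\sigma\rangle$ is a PL manifold at $[p]$, the link $L_p/\sigma$ must itself be a PL $(n-1)$-sphere. In particular, the restricted involution $\sigma\vert_{L_p}$ is a PL involution on $\bS^{n-1}$ whose quotient is $\bS^{n-1}$.

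Next I would rule out isolated fixed points. If $p$ were isolated in $\Fix(\sigma)$, then $\sigma$ would act freely on $L_p$, giving a connected double cover $\bS^{n-1}\to L_p/\sigma\cong\bS^{n-1}$. For $n\ge 3$ the sphere $\bS^{n-1}$ is simply connected, so admits no non-trivial connected double cover, contradiction. By an entirely analogous argument applied recursively on the link, the fixed set $\Fix(\sigma\vert_{L_p})$ cannot have isolated points either, and so on. Combining this recursive absence of low-dimensional strata with the codimension-$2$ bound for branch loci of PL branched covers (recalled in the preliminaries and applied to $L_p\to L_p/\sigma$, $L_p'\to L_p'/\sigma$, and so on) together with Smith's theorem, which asserts that the $\Z_2$-fixed set of $\sigma$ on a $\Z_2$-homology sphere is again a $\Z_2$-homology sphere, I would conclude that $\Fix(\sigma\vert_{L_p})$ is a $\Z_2$-homology $(n-3)$-sphere. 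Thus the link of $p$ in $\Fix(\sigma)$ is a $\Z_2$-homology $(n-3)$-sphere, which means that $\Fix(\sigma)$ is a closed $\Z_2$-homology manifold of dimension $n-2$.

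Finally, any component $\Sigma$ of this fixed set is a closed, connected $\Z_2$-homology $(n-2)$-manifold. Since Poincar\'e duality with $\Z_2$-coefficients holds for such spaces without any orientability requirement, one obtains
\[
H^{n-2}(\Sigma;\Z_2)\cong H_0(\Sigma;\Z_2)=\Z_2.
\]
As $n\ge 3$, the exponent $r=n-2$ lies in the permitted range $1\le r\le n-2$, which yields the conclusion and in particular shows that $\Sigma$ is not $\Z_2$-acyclic.

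The main obstacle I expect is the middle step: pinning down the exact dimension of $\Fix(\sigma\vert_{L_p})$. Smith theory alone gives only that it is a $\Z_2$-homology sphere of some dimension $d$ with $0\le d\le n-3$ (the upper bound being the branch-set codimension bound, the lower bound coming from the exclusion of isolated fixed points). Promoting this to $d=n-3$ requires iterating the local analysis on successive links, which is essentially a dimension-theoretic bookkeeping argument. If only $d\ge 0$ is available in some component, one can still conclude by applying $\Z_2$-Poincar\'e duality in the lower dimension $d+1$, so the proposition is unaffected.
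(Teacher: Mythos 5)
Your argument is correct and reaches the same conclusion, but it routes around the machinery the paper invokes and is worth comparing.

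The paper gets all the ``generalized manifold'' structure for free from Bredon's theory of $(\Z_2,r)$-manifolds: \cite[Thm.~7.4]{Bredon1960} says each component of $\Fix(\sigma)$ is a $(\Z_2,r)$-manifold, \cite[Thm.~7.8]{Bredon1960} gives $r\le n-2$ from orientation preservation, and---most importantly---\cite[Thm.~7.11]{Bredon1960} supplies a canonical isomorphism $H^r(\Sigma;\Z_2)\to H^n(M;\Z_2)\cong\Z_2$, which is exactly the conclusion. After citing these, the only thing left for the paper to prove is $r\ge 1$, which it does via the link argument you also use (an isolated fixed point would force the link of the image point in $M/\langle\sigma\rangle$ to be a non-simply-connected quotient of $\bS^{n-1}$, contradicting that $M/\langle\sigma\rangle$ is a manifold). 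You instead reconstruct the $\Z_2$-homology-manifold structure of $\Fix(\sigma)$ ``by hand'': at each fixed point the link in $\Fix(\sigma)$ is $\Fix(\sigma|_{L_p})$, which Smith theory identifies as a $\Z_2$-homology sphere, nonempty by the isolated-point exclusion; this makes each component a closed $\Z_2$-homology manifold, and then you apply $\Z_2$-Poincar\'e duality directly. Both proofs hinge on the same two inputs (Smith theory plus the isolated-point exclusion); the paper packages the first input into Bredon's citations while you unpack it.

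Two minor remarks. First, the effort you spend trying to pin the codimension down to exactly $2$ is not needed for the statement---the proposition asserts only that \emph{some} $r$ with $1\le r\le n-2$ works, and your fallback remark correctly observes that $r=\dim\Sigma$ suffices without knowing its exact value; in fact, the paper never proves $r=n-2$ either. Second, you implicitly use that a compact connected $\Z_2$-homology $k$-manifold without boundary satisfies $H^k(\Sigma;\Z_2)\cong H_0(\Sigma;\Z_2)=\Z_2$; to see there is no boundary, note that the links you obtain are $\Z_2$-homology spheres rather than $\Z_2$-acyclic, which is exactly what the Smith-theoretic computation yields (the boundary case corresponds to a link with the $\Z_2$-cohomology of a point, which is excluded by the isolated-point argument already). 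With these points noted, the proposal is sound and is a somewhat more elementary, if longer, route to the same proposition.
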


Recall that a space is $\Z_2$-acyclic if it has the same cohomology as a point.

\begin{proof}
Recall that a $(\Z_2,r)$-manifold is a topological space satisfying a few properties, the most important of which is that it admits a cover $\mathcal{B}$ such that the one-point compactification of every element in $\mathcal{B}$ has the same cohomology relative to the point at infinity, as the $r$-dimensional sphere $\bS^r$ relative to one of its points. See  \cite[Definition 1.1]{Bredon1960} for details and notice that it follows directly from the definition that a connected polyhedron which is a $(\Z_2,0)$-manifold is a singleton.

By \cite[Theorem 7.4]{Bredon1960} the components of $\Fix(\sigma)$ are $(\Z_2,r)$-manifolds, with $r\le n-2$ because $\sigma$ preserves the orientation \cite[Theorem 7.8]{Bredon1960}. Moreover, by \cite[Theorem 7.11]{Bredon1960}, there exists a natural isomorphism $H^r(\Sigma;\Z_2) \to H^n(M;\Z_2)$. Thus it suffices to show that $r>0$, that is, $\Fix(\sigma)$ has no isolated points.

Seeking a contradiction, suppose that $x\in M$ is an isolated fixed point of $\sigma$. Choosing a sufficiently fine, $\sigma$-invariant triangulation of $M$, let $L_x$ be the link of $x$ and notice that $L_x$ is $\sigma$-invariant, and that $L_x/\langle\sigma\rangle$ is the link of the projection of $x$ to $M/\langle\sigma\rangle$. The link $L_x$ of $x$ is homeomorphic to $\bS^{n-1}$. Since $x$ is an isolated fixed-point of $\sigma$ we have that $\sigma$ acts on $L_x=\bS^{n-1}$ without fixed points. This implies that $\pi_1(L_x/\langle\sigma\rangle)=\Z_2$. In particular, the projection of $x$ to $M/\langle\sigma\rangle$ is a point whose link does not have the homology of a sphere of dimension $n-1\ge 2$. This contradicts the assumption that $M/\langle\sigma\rangle$ is a manifold.
\end{proof}
\bigskip

At this point we fix some notation that will be used throughout the whole paper:

\begin{quote}
{\bf (*) Notation:} Suppose that $f\colon \bT^n\to N$ is a branched cover as in the statement of Theorem \ref{thm:main}; in other words, we may fix $1\le r < n$ so that $\dim H^r(N;\R)=\dim H^r(\bT^n;\R)$. Let also $X=X_f$ be the polyhedron provided by Proposition \ref{polyhedron} and $H\subset G$ the groups of automorphisms of $X$ with $\bT^n=X/H$ and $N=X/G$. Let also be 
$$\pi=\pi_H\colon X\to X/H=\bT^n$$
be the orbit map onto $X/H$.
\end{quote}

\section{The key observation}\label{sec:homology}

The key tool in the proof of Theorem \ref{thm:main} is the analysis of the action of $G$ on the singular (real) cohomology ring $H^*(X;\R)$ of $X$. 

Recall that if $\Gamma\subset G$ is a subgroup of $G$, then $H^*(X;\R)^\Gamma$ is the $\Gamma$-invariant cohomology of $X$ and that the orbit map $X\to X/\Gamma$ induces, via the pull-back, an isomorphism between $H^*(X/\Gamma;\R)$ and $H^*(X;\R)^\Gamma$.

Since $H$ is a subgroup of $G$, it is immediate that $H^*(X;\R)^G$ is $H$-invariant. The main result of this section is a partial converse.

\begin{proposition}\label{prop:inv_H*}
With the same notation as in (*), the following holds:
\begin{enumerate}
\item elements of $G$ act trivially on $H^r(X;\R)^H$, and
\item the subspace $H^1(X;\R)^H$ is $G$-invariant. Moreover, for any $g\in G$ we have that the restriction of $g^*$ to $H^1(X;\R)^H$ is equal to $\pm\id$.
\end{enumerate}
\end{proposition}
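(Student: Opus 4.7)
For part (1), the plan is a direct dimension count. By Proposition~\ref{prop:invco} and the identifications $\bT^n=X/H$ and $N=X/G$, we have $\dim H^r(X;\R)^H=\dim H^r(\bT^n;\R)=\binom{n}{r}$ and $\dim H^r(X;\R)^G=\dim H^r(N;\R)$, and the hypothesis of Theorem~\ref{thm:main} equates these. The tautological inclusion $H^r(X;\R)^G\subseteq H^r(X;\R)^H$ coming from $H\subseteq G$ is then an equality of subspaces of $H^r(X;\R)$, and since every $g\in G$ acts as the identity on its own invariants, (1) follows.

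For (2), set $W:=H^1(X;\R)^H$. First I would observe that the dimension count above applies also to the exponent $n-r$: indeed, Poincar\'e duality on the closed oriented manifold $N$ forces $\dim H^{n-r}(N;\R)=\binom{n}{n-r}$, so (1) gives that $g^*$ is trivial on $H^r(X;\R)^H$ and on $H^{n-r}(X;\R)^H$ for every $g\in G$; Proposition~\ref{polyhedron} adds that $g^*$ is also trivial on $H^n(X;\R)$. Next, $\pi_H^*$ identifies $H^*(X;\R)^H$ with the exterior algebra $\Lambda^*W$, so each cup-product map $\mu_k\colon\Lambda^kW\to H^k(X;\R)^H$ is an isomorphism. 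Fix a basis $\alpha_1,\dots,\alpha_n$ of $W$ and, for $g\in G$, set $\beta_i:=g^*\alpha_i\in H^1(X;\R)$. Compatibility of $g^*$ with cup products and the triviality recorded above yield, for every $r$-subset of indices, the identity
\[
\beta_{i_1}\cup\cdots\cup\beta_{i_r}=\alpha_{i_1}\cup\cdots\cup\alpha_{i_r}
\]
in $H^r(X;\R)$, together with the top-degree identity $\beta_1\cup\cdots\cup\beta_n=\alpha_1\cup\cdots\cup\alpha_n$ in $H^n(X;\R)$.

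The hard part is the $G$-invariance $g^*W\subseteq W$. My plan is to view the decomposable $n$-form $\omega_W:=\alpha_1\wedge\cdots\wedge\alpha_n\in\Lambda^nH^1(X;\R)$ as representing $W$ via the Pl\"ucker embedding of $Gr(n,H^1(X;\R))$, and to leverage the identities above: the degree-$r$ relations, combined with the injectivity of $\mu_r$ on $\Lambda^rW$, force $\Lambda^r(g^*W)$ to coincide with $\Lambda^rW$ as a subspace of $H^r(X;\R)$; sharpened by the top-degree identity $\Lambda^ng^*(\omega_W)\equiv\omega_W\pmod{\ker\mu_n}$, this should pin the decomposable $n$-form representing $g^*W$ back onto a scalar multiple of $\omega_W$ and thus identify $g^*W$ with $W$ by Pl\"ucker rigidity. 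This is the main obstacle of the argument.

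Once $G$-invariance is secured, $g^*|_W$ is a well-defined element of $GL(W)$ whose $r$-th exterior power is the identity on $\Lambda^rW$ by part (1). Since $g$ has finite order in $G$, $g^*|_W$ is diagonalisable over $\C$ with eigenvalues $\lambda_1,\dots,\lambda_n$ that are roots of unity, and the identity $\Lambda^rg^*|_W=\id$ reads $\lambda_{i_1}\cdots\lambda_{i_r}=1$ for every $r$-subset $\{i_1<\cdots<i_r\}$. Because $r<n$ one may vary a single index in such an $r$-subset to conclude $\lambda_1=\cdots=\lambda_n=:\lambda$, and then $\lambda^r=1$; complex-conjugate symmetry of the eigenvalues of the real map $g^*|_W$ rules out non-real $\lambda$, leaving $\lambda=\pm 1$, as required.
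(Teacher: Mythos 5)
Part (1) of your proposal is correct and matches the paper's argument exactly.

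For part (2) you correctly pinpoint where the work lies: nothing a priori guarantees that $g^*$ preserves $W:=H^1(X;\R)^H$, since $H$ need not be normal in $G$, and this invariance is the real content of the claim. The Pl\"ucker plan you outline for establishing $g^*W\subseteq W$, however, does not close the gap, as you yourself acknowledge. The difficulty is that it conflates subspaces of $\bigwedge^r H^1(X;\R)$ with their images in $H^r(X;\R)$. From part (1) and the $g^*$-equivariance of the cup product one gets, for $\omega\in\bigwedge^r W$, only that $\wedge^r g^*(\omega)-\omega$ lies in the kernel of the cup-product map $\bigwedge^r H^1(X;\R)\to H^r(X;\R)$; this map is injective on $\bigwedge^r W$ and on $\bigwedge^r(g^*W)$ separately, but not necessarily on their sum, so one cannot conclude $\wedge^r g^*(\omega)=\omega$. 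The top-degree relation is weaker still: the kernel of $\bigwedge^n H^1(X;\R)\to H^n(X;\R)\cong\R$ has codimension one, so knowing that $\wedge^n g^*$ fixes a generator of $\bigwedge^n W$ modulo that kernel says essentially nothing about the $n$-plane $g^*W$. Pl\"ucker rigidity compares decomposable $n$-vectors inside $\bigwedge^n H^1(X;\R)$, not their images in the one-dimensional $H^n(X;\R)$, so it does not apply as you use it.

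The paper's solution to exactly this obstacle is Lemma~\ref{lemma:rigidity}: if $\varphi\colon V\to V$ satisfies $\wedge^r\varphi=\id$ on $\bigwedge^r W$ (as a self-map of $\bigwedge^r W$ inside $\bigwedge^r V$), then for every linearly independent $u_1,\dots,u_r\in W$ the equality $\varphi(u_1)\wedge\cdots\wedge\varphi(u_r)=u_1\wedge\cdots\wedge u_r$ forces $\span(\varphi(u_1),\dots,\varphi(u_r))=\span(u_1,\dots,u_r)$, so every $r$-dimensional subspace of $W$ is $\varphi$-invariant, hence $W$ itself is; intersecting $r$-planes then shows every line in $W$ is invariant, so $\varphi|_W$ is a homothety whose ratio $\lambda$ satisfies $\lambda^r=1$, giving $\lambda=\pm1$. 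This delivers the invariance and the $\pm\id$ conclusion in one stroke; the paper then checks the lemma's hypothesis from part~(1) and the commutative diagram~\eqref{eq:diag}. Your closing eigenvalue computation is a correct alternative to the homothety argument once invariance is secured, and the Poincar\'e-duality observation (that the cohomological hypothesis transfers from $r$ to $n-r$) is true but unused; neither resolves the invariance step, which is precisely what Lemma~\ref{lemma:rigidity} supplies.
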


To prove the first claim of Proposition \ref{prop:inv_H*} observe that
\[
\dim H^r(X;\R)^G = \dim H^r(X/G;\R)=\dim H^r(N;\R)
\]
and, similarly,
\[
\dim H^r(X;\R)^H = \dim H^r(X/H;\R)=\dim H^r(\bT^n;\R).
\]

Since $\dim H^r(N;\R)=\dim H^r(\bT^n;\R)$ and $H^r(X;\R)^G\subset H^r(X;\R)^H$, it follows that $H^r(X;\R)^G=H^r(X;\R)^H$. Thus $G$ acts trivially on $H^r(X;\R)^H$ by definition.

The second part of the proof uses the following observation.
\begin{lemma}
\label{lemma:rigidity}
Let $r\ge 1$, $V$ be a vector space, $W$ a subspace of $V$ with $\dim W\ge r+1$, and $\varphi \colon V \to V$ a linear mapping so that $\wedge^r\varphi = \id \colon \bigwedge^r W \to \bigwedge^r W$. Then $W$ is $\varphi$-invariant and $\varphi|W=\pm \id$.
\end{lemma}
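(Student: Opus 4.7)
The plan is to establish the conclusion in two independent steps: first that $W$ is $\varphi$-invariant (this only needs $\dim W \geq r$), and then that $\varphi|_W = \pm\id$ (this is where $\dim W \geq r+1$ is essential).

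For the first step, fix a complement $V = W \oplus V'$ and use the induced $(W,V')$-bidegree decomposition
\[
\bigwedge^r V \;=\; \bigoplus_{k=0}^{r} \bigwedge^k W \wedge \bigwedge^{r-k} V'.
\]
Given a nonzero $w \in W$, extend it to a linearly independent tuple $w = w_1,\ldots, w_r$ in $W$ and write $\varphi(w_i) = u_i + v_i$ with $u_i \in W$ and $v_i \in V'$. The hypothesis gives $\varphi(w_1) \wedge \cdots \wedge \varphi(w_r) = w_1 \wedge \cdots \wedge w_r$, an element of pure bidegree $(r,0)$. Expanding $(u_1 + v_1)\wedge\cdots\wedge(u_r + v_r)$ and comparing bidegree components, the $(r,0)$-piece gives $u_1\wedge\cdots\wedge u_r = w_1\wedge\cdots\wedge w_r \neq 0$, so the $u_i$ are linearly independent in $W$, while the $(r-1,1)$-piece gives
\[
\sum_{i=1}^{r} (-1)^{r-i}\, (u_1 \wedge \cdots \wedge \widehat{u_i} \wedge \cdots \wedge u_r) \wedge v_i = 0.
\]
The $r$ wedges $u_1\wedge\cdots\wedge \widehat{u_i}\wedge\cdots\wedge u_r$ form a basis of $\bigwedge^{r-1}\span(u_1,\ldots,u_r)$, hence are linearly independent in $\bigwedge^{r-1}W$; this forces $v_i = 0$ for every $i$, and in particular $\varphi(w) = u_1 \in W$.

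For the second step, $\varphi$ now restricts to a linear map $\varphi|_W$ with $\wedge^r(\varphi|_W) = \id$ on $\bigwedge^r W$. Since decomposable $r$-vectors correspond to $r$-dimensional subspaces, $\varphi|_W$ preserves every $r$-plane of $W$ setwise. For a nonzero $v \in W$, the $r$-planes through $v$ correspond bijectively to $(r-1)$-planes in the quotient $W/\span(v)$, which has dimension at least $r$ by the hypothesis $\dim W \geq r+1$. The intersection of all $(r-1)$-subspaces of a vector space of strictly larger dimension is $\{0\}$, so $\varphi(v) \in \span(v)$, say $\varphi(v) = \lambda_v v$. Comparing $\varphi(v+w) = \lambda_{v+w}(v+w)$ with $\lambda_v v + \lambda_w w$ for linearly independent $v,w$ shows $\lambda_v$ is a single constant $\lambda$, so $\varphi|_W = \lambda\,\id$. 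Then $\wedge^r(\varphi|_W) = \lambda^r\,\id = \id$ forces $\lambda^r = 1$, which over $\R$ means $\lambda = \pm 1$.

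The main obstacle is the bidegree bookkeeping in the first step; the rest is classical Grassmannian rigidity. The dimension hypothesis $\dim W \geq r+1$ enters precisely and unavoidably in the second step, in order to guarantee that the intersection of all $(r-1)$-planes in $W/\span(v)$ is trivial: indeed, if $\dim W = r$ then any $\varphi|_W$ of determinant one already satisfies the hypothesis, so the assumption cannot be weakened.
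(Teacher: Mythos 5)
Your proof is correct, and its second half is essentially the paper's argument: $\varphi$ preserves every $r$-plane of $W$, and since $\dim W\ge r+1$ the intersection of the $r$-planes through a nonzero $v$ is exactly $\span(v)$, so $\varphi(v)\in\span(v)$; then $\lambda^r=1$ over $\R$ gives $\lambda=\pm1$. For the first half ($\varphi$-invariance of $W$) the paper takes a shortcut you could also use: since $\wedge^r\varphi=\id$ on $\bigwedge^rW$, each decomposable $r$-vector $u_1\wedge\cdots\wedge u_r$ with $u_i\in W$ is fixed, and a nonzero decomposable $r$-vector determines the $r$-plane it spans (namely $\{v: v\wedge u_1\wedge\cdots\wedge u_r=0\}$), so $\varphi(U)=U$ for every $r$-dimensional $U\subset W$ and hence $\varphi(W)\subset W$ since $W$ is the union of its $r$-planes; this makes your complement/bidegree computation, while perfectly valid, unnecessary. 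Your closing remark that $\dim W=r$ genuinely fails (take any $\varphi|_W$ of determinant $1$) is a nice sharpness observation not in the paper.
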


\begin{proof}
Since $\wedge^r\varphi=\id$ on $\bigwedge^r W$, all $r$-dimensional subspaces of $W$ are invariant under $\varphi$; hence $W$ is $\varphi$-invariant. Let $v\in W$, $v\neq 0$, and let $U_1,\ldots, U_s$ be $r$-dimensional subspaces of $W$ whose intersection $L=\cap U_i$ is the line containing $v$. Since subspaces $U_i$, $i=1,\ldots, s$, are $\varphi$-invariant, the line $L$ is also invariant. It follows that the restriction of $\varphi$ to $W$ fixes every line in $L$ and hence is a homothety of ratio $\lambda\in\R$: $\varphi(v)=\lambda v$ for all $v\in W$.

We claim that $\lambda=\pm 1$. In order to see that this is the case let $v_1,\ldots, v_r \in W$ with $v_1\wedge \cdots \wedge v_r\neq 0$ and compute
\begin{eqnarray*}
v_1\wedge \cdots \wedge v_r &=&(\wedge^r\varphi)(v_1\wedge\cdots \wedge v_r)=\varphi(v_1)\wedge\cdots \wedge \varphi(v_r)\\
&=&(\lambda v_1)\wedge\cdots \wedge(\lambda v_r) =\lambda^r(v_1\wedge \cdots \wedge v_r).
\end{eqnarray*}
The claim follows.
\end{proof}

In order to prove the second claim of Proposition \ref{prop:inv_H*} we are going to apply Lemma \ref{lemma:rigidity} to $V=H^1(X;\R)$, $W=H^1(X;\R)^H$ and $\varphi=g^*$ for $g\in G$. Notice that $\dim H^1(X;\R)^H=\dim H^1(\bT^n;\R)=n\ge r+1$. 

Consider the commutative diagram
\begin{equation}
\label{eq:diag}
\xymatrix{
\bigwedge^r(H^1(X;\R)^H)\ar[d]_{\wedge^r\pi^*}\ar[r]^{\wedge} & H^r(X;\R)^H\ar[d]_{\pi^*} \\
\bigwedge^r(H^1(X/H;\R))\ar[r]^{\wedge} & H^r(X/H;\R)
}
\end{equation}
where the vertical arrows are the pull-back of the orbit map $\pi\colon X\to X/H$, while the horizontal maps are the cup product in cohomology. By Proposition \ref{prop:invco}, the two vertical arrows are isomorphisms. On the other hand, the lower horizontal arrow is also an isomorphism because $X/H=\bT^n$ is a $n$-dimensional torus. This proves that the homomorphism 
$$\bigwedge{ }^r\left(H_1(X;\R)^H\right)\to H^r(X;\R)^H$$ 
is an isomorphism. 

By the first claim of Proposition \ref{prop:inv_H*} we have that $g^*$ acts trivially on $H^r(X;\R)^H$ and hence on $\bigwedge^r\left(H_1(X;\R)^H\right)$. In other words, Lemma \ref{lemma:rigidity} applies and the second claim of Proposition \ref{prop:inv_H*} follows. This concludes the proof of Proposition \ref{prop:inv_H*} .\qed
\medskip

Let $G \to \mathrm{Hom}(H^1(X;\R)^H,H^1(X;\R)^H)$ be the map $g\mapsto g^*$. By Proposition \ref{prop:inv_H*}, the image of this homomorphism is $\{\pm \id\}$. Identifying the latter group with  the multiplicative group $\bF=\{\pm 1\}$ of two elements, we obtain a homomorphism 
\begin{equation}\label{eq-sign}
\delta\colon G\to \bF.
\end{equation}
Notice that by definition $H\subset\Ker(\delta)$.

With the same notation as above we can now show that $H^*(X;\R)^H$ is in fact $G$-invariant and describe the action 
$$G\curvearrowright H^*(X;\R)^H.$$

\begin{corollary}\label{cor-inv}
For $s=0,1,\dots,n$ and $[\omega]\in H^s(X;\R)^H$ we have
$$g^*[\omega]=\delta(g)^s[\omega]$$ 
for all $g\in G$. 
\end{corollary}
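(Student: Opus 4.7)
The plan is to reduce to the one-dimensional case (which is Proposition \ref{prop:inv_H*}) by exploiting the fact that the cohomology ring of $X/H=\bT^n$ is the exterior algebra on $H^1(\bT^n;\R)$, and that pullback by the orbit map is a ring isomorphism onto $H^*(X;\R)^H$. Concretely, the cup product is $G$-equivariant (every $g^*$ is a ring homomorphism), so knowing how $g^*$ acts on $H^1(X;\R)^H$ determines how it acts on any cup product of degree-one invariant classes; then all that remains is to show that such products exhaust $H^s(X;\R)^H$.

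For this last point, I would mimic diagram \eqref{eq:diag} with $s$ in place of $r$, so that the square
\[
\xymatrix{
\bigwedge^s(H^1(X;\R)^H)\ar[d]_{\wedge^s\pi^*}\ar[r]^{\wedge} & H^s(X;\R)^H\ar[d]_{\pi^*} \\
\bigwedge^s(H^1(X/H;\R))\ar[r]^{\wedge} & H^s(X/H;\R)
}
\]
commutes. The two vertical arrows are isomorphisms by Proposition \ref{prop:invco}, and the lower horizontal arrow is an isomorphism because $X/H=\bT^n$ has cohomology ring equal to the exterior algebra on $H^1(\bT^n;\R)$. Hence the top horizontal arrow is also an isomorphism, so every class $[\omega]\in H^s(X;\R)^H$ can be written as a finite sum
\[
[\omega]=\sum_j \alpha_{j,1}\cup\cdots\cup\alpha_{j,s},\qquad \alpha_{j,i}\in H^1(X;\R)^H.
\]

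In particular $H^s(X;\R)^H$ is $G$-invariant since $H^1(X;\R)^H$ is. Then, for any $g\in G$, using that $g^*$ is a ring homomorphism together with the second part of Proposition \ref{prop:inv_H*}, I would compute
\[
g^*[\omega]=\sum_j g^*(\alpha_{j,1})\cup\cdots\cup g^*(\alpha_{j,s})=\sum_j \delta(g)\alpha_{j,1}\cup\cdots\cup\delta(g)\alpha_{j,s}=\delta(g)^s [\omega],
\]
which is exactly the assertion of the corollary. The case $s=0$ is trivial (with the convention that the empty cup product gives the unit), and the case $s=1$ is the content of Proposition \ref{prop:inv_H*}.

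There is no real obstacle here; the only point that needs a small check is that the argument from diagram \eqref{eq:diag} really does go through for every $s\in\{0,1,\dots,n\}$ rather than just for the distinguished exponent $r$. This is immediate because the argument used only the structure of the cohomology ring of the torus and the fact that $\pi^*$ is a ring isomorphism between $H^*(X/H;\R)$ and $H^*(X;\R)^H$; neither of these inputs depends on the hypothesis $\dim H^r(N;\R)=\dim H^r(\bT^n;\R)$, which enters only implicitly through Proposition \ref{prop:inv_H*}.
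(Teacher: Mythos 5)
Your proof is correct and takes essentially the same route as the paper: both reuse diagram \eqref{eq:diag} with $s$ in place of $r$ to conclude that cup product gives an isomorphism $\bigwedge^s(H^1(X;\R)^H)\to H^s(X;\R)^H$, and then read off the action of $g^*$ from the degree-one case via multiplicativity of $g^*$. Your write-up is just somewhat more explicit than the paper's one-line ``hence $g$ acts on $H^s(X;\R)^H$ by multiplication by $\delta(g)^s$.''
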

\begin{proof}
Using the same argument as in the proof of Proposition \ref{prop:inv_H*}, more concretely using diagram \eqref{eq:diag} and the discussion following that diagram, we deduce that the standard homomorphism
$$\bigwedge{ }^s\left(H_1(X;\R)^H\right)\to H^s(X;\R)^H$$ 
is an isomorphism. By Proposition \ref{prop:inv_H*}, and by definition of $\delta(\cdot)$, we have that $g$ acts on $H^1(X;\R)^H$ by multiplication by $\delta(g)$. Hence, $g$ acts on $H^s(X;\R)^H$ by multiplication by $\delta(g)^s$, as we claimed.
\end{proof}

>From here we obtain the first non-trivial restrictions on any manifold $N$ as in (*).

\begin{corollary}\label{cor-cool}
With the same notation as in (*) the following holds:
\begin{enumerate}
\item The pullback $f^*\colon H^{2*}(N;\R)\to H^{2*}(\bT^n;\R)$ is an isomorphism. 
\item If $n$ or $r$ is odd, then $f^*\colon H^*(N;\R)\to H^*(\bT^n;\R)$ is an isomorphism.
\end{enumerate}
\end{corollary}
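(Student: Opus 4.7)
The plan is to reduce both statements to comparisons of the $H$- and $G$-invariant parts of $H^*(X;\R)$. Since $f\circ \pi_H=\pi_G$, on cohomology we have $\pi_H^*\circ f^*=\pi_G^*$, and Proposition \ref{prop:invco} identifies $\pi_H^*$ and $\pi_G^*$ with the standard isomorphisms $H^*(\bT^n;\R)\to H^*(X;\R)^H$ and $H^*(N;\R)\to H^*(X;\R)^G$. Under these identifications $f^*$ becomes simply the inclusion $H^*(X;\R)^G\hookrightarrow H^*(X;\R)^H$, so proving that $f^*$ is an isomorphism in degree $s$ amounts to showing that every $H$-invariant class in that degree is automatically $G$-invariant.

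For part (1), I would take any $[\omega]\in H^{2k}(X;\R)^H$ and apply Corollary \ref{cor-inv}: for every $g\in G$ one has $g^*[\omega]=\delta(g)^{2k}[\omega]=[\omega]$, because $\delta(g)\in\{\pm 1\}$. Hence $H^{2k}(X;\R)^G=H^{2k}(X;\R)^H$ and $f^*$ is an isomorphism in every even degree.

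For part (2), the strategy is to prove that, under the hypothesis that $n$ or $r$ is odd, the homomorphism $\delta\colon G\to \bF$ of \eqref{eq-sign} is trivial; once this is established, Corollary \ref{cor-inv} gives $g^*[\omega]=\delta(g)^s[\omega]=[\omega]$ for every $s$ and every $[\omega]\in H^s(X;\R)^H$, so the equality $H^s(X;\R)^G=H^s(X;\R)^H$ holds in all degrees. To see that $\delta$ is trivial I would exhibit a nonzero $H$-invariant cohomology class on which $G$ is already known to act trivially, and then read $\delta$ off from Corollary \ref{cor-inv}. If $r$ is odd, Proposition \ref{prop:inv_H*}(1) provides exactly such a class in the nonzero space $H^r(X;\R)^H$; comparing $g^*[\omega]=[\omega]$ with $g^*[\omega]=\delta(g)^r[\omega]=\delta(g)[\omega]$ forces $\delta(g)=1$. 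If instead $n$ is odd, I would use the top degree: by Proposition \ref{prop:invco} the space $H^n(X;\R)^H\cong H^n(\bT^n;\R)$ is one-dimensional, and hence coincides with $H^n(X;\R)\cong \R$, on which $G$ acts trivially by Proposition \ref{polyhedron}; the analogous comparison $\delta(g)^n=\delta(g)=1$ again yields $\delta\equiv 1$.

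No serious obstacle is expected: the argument is essentially an orchestration of Propositions \ref{prop:invco}, \ref{polyhedron}, \ref{prop:inv_H*}, and Corollary \ref{cor-inv}. The only place one might pause is the tacit use that the $H$-invariant classes chosen to pin down $\delta$ are actually nonzero, and this is immediate from Proposition \ref{prop:invco} together with the standard cohomology of $\bT^n$.
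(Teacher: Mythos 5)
Your argument is correct and matches the paper's own proof in essentially every detail: part (1) follows by observing that $\delta(g)^{2k}=1$ and applying Corollary \ref{cor-inv}, and part (2) reduces to showing $\delta\equiv 1$ by testing $g^*$ on the nonzero space $H^r(X;\R)^H$ (via Proposition \ref{prop:inv_H*}(1)) when $r$ is odd, or on $H^n(X;\R)^H=H^n(X;\R)$ (via Proposition \ref{polyhedron}) when $n$ is odd. Your explicit check that the witnessing classes are nonzero is a nice touch the paper leaves implicit, but otherwise the two arguments coincide.
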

\begin{proof}
Since $H^s(N;\R)=H^s(X;\R)^G\subset H^s(X;\R)^H$ we deduce from Corollary \ref{cor-inv} that 
$$H^s(N;\R)=\left(H^s(X;\R)^H\right)^G$$
for $s$ even.
Also by Corollary \ref{cor-inv} we have that the action $G\curvearrowright H^s(X;\R)^H$ is trivial for even $s$; hence
$$H^s(N;\R)=\left(H^s(X;\R)^H\right)^G=H^s(X;\R)^H=H^s(\bT^n;\R)$$
for even $s$. This proves (1).

Suppose now, for the sake of concreteness, that $n$ is odd. Notice that on the one hand $g^*$ is trivial on $H^n(X;\R)$ by Proposition \ref{polyhedron} while on the other hand it acts by multiplication by $\delta(g)^n$ by Corollary \ref{cor-inv}. Hence $\delta(g)=1$ for all $g\in G$. Observe that the same argument applies is $r$ is odd. Once we know that $\delta(g)=1$ for all $g\in G$, the second claim follows as (1).
\end{proof}

\section{Averaging}\label{sec:averaging}
In this section we construct $n$ cocycles in $C^1(X;\R)$ which represent a basis of $H^1(X;\R)^H$ and have some highly desirable properties; notation is as in (*). 

To begin with choose bases $[c_1],\dots,[c_n]$ of $H_1(\bT^n;\Z)$ and $[\theta_1],\dots,[\theta_n]$ of $H^1(\bT^n;\R)$ with
$$\langle[\theta_i],[c_j]\rangle=\delta_{ij},$$
where $\delta_{ij}$ is the Kronecker symbol.

\begin{proposition}\label{prop:cocycles}
There are 1-cocycles $\Theta_1,\dots,\Theta_n\in C^1(X;\R)$ with the following properties:
\begin{enumerate}
\item $\pi^*([\theta_i])=[\Theta_i]$ for $i=1,\dots,n$.
\item $g^\#(\Theta_i)=\delta(g)\Theta_i$ for any $g\in G$ and $i=1,\dots,n$. Here $\delta(\cdot)$ is the homomorphism \eqref{eq-sign}.
\item $\langle\Theta_i,\tilde c_j\rangle=\delta_{ij}$ for any 1-chain $\tilde c_j\in C_1(X;\R)$ in $X$ such that $\pi_\#(\tilde c_j)$ is 1-cycle representing $[c_j]$.
\end{enumerate}
\end{proposition}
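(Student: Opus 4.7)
The plan is to build the cocycles $\Theta_i$ by an averaging procedure and then verify the three properties in sequence. For each $i$, start by choosing any cocycle $\tilde\Theta_i \in Z^1(X;\R)$ representing the class $\pi^*[\theta_i] \in H^1(X;\R)^H$, which exists by Proposition \ref{prop:invco}. Define
\[
\Theta_i \;=\; \frac{1}{|G|}\sum_{g\in G}\delta(g)\, g^\#\tilde\Theta_i.
\]
The cup product with $\delta(g)$ is exactly what is required so that the averaging is compatible with the action of $G$ on $H^1(X;\R)^H$ described in Proposition \ref{prop:inv_H*}(2).

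For property (2), compute $h^\#\Theta_i$ for $h \in G$ by reindexing the sum via $g \mapsto gh$ and using that $\delta$ is a homomorphism with values in $\{\pm 1\}$; this yields $h^\#\Theta_i = \delta(h)\Theta_i$. For property (1), I would pass to cohomology: by Proposition \ref{prop:inv_H*}(2), $g^*[\tilde\Theta_i] = \delta(g)[\tilde\Theta_i]$ for every $g \in G$, so each summand in the averaged cohomology class equals $\delta(g)^2[\tilde\Theta_i] = [\tilde\Theta_i]$, giving $[\Theta_i] = [\tilde\Theta_i] = \pi^*[\theta_i]$.

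The main subtlety is property (3), since $\tilde c_j$ is only assumed to project to a cycle and need not itself be a cycle, so one cannot directly pair $\Theta_i$ with it via cohomology. The idea is to symmetrize over $H$ and invoke Lemma \ref{lem:transfer}. Since $H \subset \Ker(\delta)$, property (2) implies $h^\#\Theta_i = \Theta_i$ for all $h \in H$, hence by \eqref{eq-bla},
\[
\langle \Theta_i,\, h_\#\tilde c_j\rangle \;=\; \langle h^\#\Theta_i,\, \tilde c_j\rangle \;=\; \langle \Theta_i,\, \tilde c_j\rangle
\]
for every $h\in H$. Setting $C_j := \sum_{h\in H} h_\#\tilde c_j$, Lemma \ref{lem:transfer} tells us that $C_j$ is an actual $1$-cycle in $X$ and that $\langle\pi^*[\theta_i],[C_j]\rangle = |H|\langle[\theta_i],[c_j]\rangle = |H|\delta_{ij}$. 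On the other hand, by the $H$-invariance observation above, $\langle\Theta_i, C_j\rangle = |H|\langle\Theta_i,\tilde c_j\rangle$. Since $\Theta_i$ is a cocycle and $C_j$ a cycle, the left-hand side equals $\langle\pi^*[\theta_i],[C_j]\rangle$, and dividing by $|H|$ gives $\langle\Theta_i,\tilde c_j\rangle = \delta_{ij}$, as required.

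The hard part is really just recognizing that property (3) must be verified at the chain level, not the homology level, and that the way to reduce it to a homological statement is to use the transfer construction of Lemma \ref{lem:transfer} together with the $H$-invariance of $\Theta_i$ that comes for free from property (2).
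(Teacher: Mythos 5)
Your proposal is correct and uses the same essential tools as the paper: a $\delta$-twisted average over $G$, and Lemma \ref{lem:transfer} to handle property (3). The construction is slightly more general than the paper's in one harmless way --- you start from an arbitrary cocycle $\tilde\Theta_i$ representing $\pi^*[\theta_i]$, whereas the paper takes $\tilde\Theta_i = \pi^\#\theta_i$ for a chosen cocycle $\theta_i$ on $\bT^n$; this changes nothing because your proof of (3) only uses $[\Theta_i]=\pi^*[\theta_i]$, not any special form of the representative. The more interesting divergence is in the proof of (3). The paper expands the $G$-average as an iterated sum over coset representatives $G/H$ and over $H$, uses $\delta(gh)=\delta(g)$ and $(gh)_\#=g_\#h_\#$ to pull the $H$-sum through, and only then recognizes a cocycle--cycle pairing. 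You instead observe that property (2), already proven, gives $h^\#\Theta_i=\Theta_i$ at the cochain level for all $h\in H$, hence $\langle\Theta_i,\tilde c_j\rangle=\frac{1}{|H|}\langle\Theta_i,C_j\rangle$ where $C_j=\sum_{h\in H}h_\#\tilde c_j$ is the cycle guaranteed by Lemma \ref{lem:transfer}; the remaining evaluation is purely homological. This is a genuine streamlining --- it reuses (2) rather than implicitly re-deriving the $H$-invariance inside the computation --- and makes more transparent why the pairing reduces to a cohomology--homology pairing. Both arguments buy the same result, but yours makes the logical dependence (3)\,$\Leftarrow$\,(2) explicit, which the paper's coset manipulation obscures.
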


We begin choosing 1-cocycles $\theta_1,\dots,\theta_n\in C^1(\bT^n;\R)$ representing the cohomology classes $[\theta_1],\dots,[\theta_n]$ and define $\Theta_i$ as a twisted $G$-average of the pull-back $\pi^\#(\theta_i)$:
$$\Theta_i=\frac 1{\vert G\vert}\sum_{g\in G}\delta(g)g^\#(\pi^\#(\theta_i)).$$
Observe that $\Theta_i$ is a cocycle because it is a weighted sum of the cocycles $g^\#(\pi^\#(\theta_i))$. We claim that the cocycles $\Theta_1,\dots,\Theta_n$ fulfill the claims of Proposition \ref{prop:cocycles}.

To show (1), notice that, since $\pi^\#(\theta_i)$ represents the cohomology class $\pi^*([\theta_i])\in \pi^*(H^1(X/H;\R))=H^1(X;\R)^H$, we have by Proposition \ref{prop:inv_H*} that 
$$[g^\#(\pi^\#(\theta_i))]=g^*[\pi^\#(\theta_i)]=\delta(g)\pi^*([\theta_i]).$$
Since $\delta(g)^2=1$ for all $g\in G$ we have
$$[\Theta_i]=\left[\frac 1{\vert G\vert}\sum_{g\in G}\delta(g)g^\#(\pi^\#(\theta_i))\right]=
\frac 1{\vert G\vert}\sum_{g\in G}\delta(g)^2\pi^*([\theta_i])=\pi^*[\theta_i]$$
as claimed.

The validity of (2) follows from a simple computation and the fact that $\delta$ is a homomorphism.

In order to show (3) observe that the homomorphism $\delta$ from \eqref{eq-sign} induces a well-defined map
$$\delta:G/H\to\bF$$
because $H\subset\Ker(\delta)$. From now on we choose a representative $g$ for every class $gH\in G/H$ and hence identify $G/H$ with a subset of $G$. We start rewriting $\langle\Theta_i,\tilde c_j\rangle$:

\begin{equation}
\label{eq:iterated_average}
\begin{split}
\langle\Theta_i,\tilde c_j\rangle 
   & = \frac 1{\vert G\vert}\left\langle\sum_{g\in G}\delta(g)g^\#(\pi^\#(\theta_i)),\tilde c_j\right\rangle \\
   & = \frac 1{\vert G\vert}\left\langle\pi^\#(\theta_i),\sum_{g\in G}\delta(g)g_\#(\tilde c_j)\right\rangle \\
   & = \frac 1{\vert G\vert}\left\langle\pi^\#(\theta_i),\sum_{g\in G/H} \sum_{h \in H} \delta(gh) (gh)_\#(\tilde c_j)\right\rangle \\
   & = \frac 1{\vert G\vert}\left\langle\pi^\#(\theta_i),\sum_{g\in G/H} \sum_{h \in H} \delta(g) g_\#(h_\#(\tilde c_j))\right\rangle \\   
   & = \frac 1{\vert G\vert}\left\langle\pi^\#(\theta_i),\sum_{g\in G/H} \delta(g)g_\#\left(\sum_{h \in H} h_\#(\tilde c_j)\right)\right\rangle \\
   & =  \frac 1{\vert G\vert}\left\langle\sum_{g\in G/H} \delta(g)g^\#(\pi^\#(\theta_i)),\sum_{h \in H} h_\#(\tilde c_j)\right\rangle.
\end{split}
\end{equation}
Notice that, by Lemma \ref{lem:transfer}, $\sum_{h \in H} h_\#(\tilde c_j)$ is a 1-cycle satisfying
\begin{equation}\label{eq-bla3}
\pi_*\left(\left[\sum_{h \in H} h_\#(\tilde c_j)\right]\right)=\vert H\vert[c_j].
\end{equation}
Observing that in the last equation in \eqref{eq:iterated_average} we are evaluating a 1-cocycle and a 1-cycle we obtain the same result if we evaluate the corresponding cohomology and homology classes:

\begin{equation}
\label{eq:iterated_average2}
\begin{split}
\langle\Theta_i,\tilde c_j\rangle 
   & =  \frac 1{\vert G\vert}\left\langle\sum_{g\in G/H} \delta(g)g^\#(\pi^\#(\theta_i)),\sum_{h \in H} h_\#(\tilde c_j)\right\rangle\\
   & =  \frac 1{\vert G\vert}\left\langle\left[\sum_{g\in G/H} \delta(g)g^\#(\pi^\#(\theta_i))\right],\left[\sum_{h \in H} h_\#(\tilde c_j)\right]\right\rangle\\
   & =  \frac 1{\vert G\vert}\left\langle\sum_{g\in G/H} \delta(g)g^*(\pi^*([\theta_i])),\left[\sum_{h \in H} h_\#(\tilde c_j)\right]\right\rangle.
\end{split}
\end{equation}
By Proposition \ref{prop:inv_H*}, and $\delta(g)^2=1$, we have 
\begin{equation}\label{eq-bla2}
\delta(g)g^*(\pi^*([\theta_i]))=\pi^*[\theta_i]
\end{equation}
for all $g\in G$. Therefore, we obtain from \eqref{eq:iterated_average2} and \eqref{eq-bla2}:
\begin{equation}
\label{eq:iterated_average3}
\begin{split}
\langle\Theta_i,\tilde c_j\rangle 
   & =  \frac 1{\vert G\vert}\left\langle\sum_{g\in G/H} \delta(g)g^*(\pi^*([\theta_i])),\left[\sum_{h \in H} h_\#(\tilde c_j)\right]\right\rangle \\
   & = \frac {\vert G/H\vert}{\vert G\vert}\left\langle\pi^*([\theta_i]),\left[\sum_{h \in H} h_\#(\tilde c_j)\right]\right\rangle \\
      & = \frac {\vert G/H\vert}{\vert G\vert}\left\langle[\theta_i],\pi_*\left[\sum_{h \in H} h_\#(\tilde c_j)\right]\right\rangle \\
    & = \frac{\vert G/H\vert}{\vert G\vert}\left\langle[\theta_i],\vert H\vert[c_j]\right\rangle,
\end{split}
\end{equation}
where the last equality holds by \eqref{eq-bla3}. It follows that
$$\langle\Theta_i,\tilde c_j\rangle=\langle[\theta_i],[c_j]\rangle=\delta_{ij}$$
as claimed. This concludes the proof of Proposition \ref{prop:cocycles}.\qed
\medskip

Before moving on we observe that evaluating the cocycles $\Theta_i$ provided by Proposition \ref{prop:cocycles} onto any integral cycle in $X$ we obtain not only a real number but in fact an integer.

\begin{lemma}\label{lemma-aj}
Let $c\in C_1(X;\Z)$ be an integral 1-chain whose push-forward $\pi_\#(c)$ is a 1-cycle in $X/H=\bT^n$. Then $\langle\Theta_i,c\rangle\in\Z$.
\end{lemma}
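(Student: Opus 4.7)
The plan is to reduce the pairing $\langle \Theta_i, c\rangle$, via the same averaging trick used in \eqref{eq:iterated_average}--\eqref{eq:iterated_average3}, to the integer pairing $\langle[\theta_i],[\pi_\#(c)]\rangle$ in $\bT^n$. The only extra input needed is that $[\theta_1],\dots,[\theta_n]$ can be assumed to lie in $H^1(\bT^n;\Z)$, which is automatic because they form the dual basis to an integral basis of $H_1(\bT^n;\Z)$ and $H^1(\bT^n;\Z)$ is torsion-free.

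More concretely, first I would unfold the definition of $\Theta_i$ and move the group action to the chain side, using \eqref{eq-bla}, to write
\[
\langle\Theta_i,c\rangle=\frac{1}{|G|}\sum_{g\in G}\delta(g)\langle\pi^\#(\theta_i),g_\#(c)\rangle.
\]
Next, since $H\subset\Ker(\delta)$, I would pick coset representatives of $G/H$ and factor each $g\in G$ as $g_0h$ with $g_0\in G/H$ and $h\in H$, exactly as in \eqref{eq:iterated_average}, to obtain
\[
\langle\Theta_i,c\rangle=\frac{1}{|G|}\left\langle\sum_{g\in G/H}\delta(g)g^\#(\pi^\#(\theta_i)),\,C\right\rangle,
\quad C:=\sum_{h\in H}h_\#(c).
\]

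The key point is that, by Lemma \ref{lem:transfer} applied to the hypothesis that $\pi_\#(c)$ is a cycle, $C$ is a 1-cycle in $X$ with $\pi_*[C]=|H|\,[\pi_\#(c)]$. Since the right-hand cochain in the bracket above is also a cocycle (being a sum of cocycles), I can replace the evaluation on chains and cochains with the induced pairing on homology and cohomology classes. Then Proposition \ref{prop:inv_H*} gives $g^*(\pi^*[\theta_i])=\delta(g)\pi^*[\theta_i]$, and using $\delta(g)^2=1$ the sum over $G/H$ collapses to $|G/H|\,\pi^*[\theta_i]$. Together with \eqref{eq-bla32} and the formula $\pi_*[C]=|H|[\pi_\#(c)]$ this yields
\[
\langle\Theta_i,c\rangle=\frac{|G/H|\cdot|H|}{|G|}\langle[\theta_i],[\pi_\#(c)]\rangle=\langle[\theta_i],[\pi_\#(c)]\rangle.
\]

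Finally, because $c$ is an integral 1-chain, $\pi_\#(c)$ is an integral 1-cycle in $\bT^n$, so $[\pi_\#(c)]\in H_1(\bT^n;\Z)$; and the class $[\theta_i]\in H^1(\bT^n;\R)$ lies in the integral lattice $H^1(\bT^n;\Z)$ since it is dual to the integral basis $[c_1],\dots,[c_n]$. Hence $\langle[\theta_i],[\pi_\#(c)]\rangle\in\Z$, proving the lemma. The only mild subtlety is the integrality of the $[\theta_i]$; if this is not already part of the paper's conventions, one just observes that the initial choice of dual basis can be made inside $H^1(\bT^n;\Z)$, which is legitimate because the pairing $H^1(\bT^n;\Z)\times H_1(\bT^n;\Z)\to\Z$ is perfect.
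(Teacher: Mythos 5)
Your proposal is correct and follows essentially the same route as the paper: the paper's proof invokes ``a similar computation as in the proof of (3) in Proposition~\ref{prop:cocycles}'' to reduce to $\langle\Theta_i,c\rangle=\langle[\theta_i],[\pi_\#(c)]\rangle$, and then concludes by expressing $[\pi_\#(c)]$ as an integer combination of the $[c_j]$ and using $\langle[\theta_i],[c_j]\rangle=\delta_{ij}$, which is the same integrality fact you phrase via $[\theta_i]\in H^1(\bT^n;\Z)$.
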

\begin{proof}
A similar computation as in the proof of (3) in Proposition \ref{prop:cocycles} shows that
$$\langle\Theta_i,c\rangle=\langle[\theta_i],[\pi_\#(c)]\rangle.$$
The claim follows because $[\pi_\#(c)]\in H_1(X/H,\Z)$ can be written as a linear combination of the basis $[c_1],\dots,[c_n]$ with integer coefficients and because $\langle[\theta_i],[c_j]\rangle=\delta_{ij}$ by the choice of the bases.
\end{proof}

\section{The Abel-Jacobi map}\label{sec:abel-jacobi}
Still with the same notation as in (*), we construct in this section a map $\Psi:X\to\R^n/\Z^n$ and study its equivariance properties with respect to the action of $G$ on $X$. The construction of the map $\Psi$ is motivated by the construction of the classical Abel-Jacobi map of a Riemann surface into its Jacobian.
\medskip

To begin with we consider the linear map
$$\tilde\Psi\colon C_1(X;\R)\to\R^n,\ \tilde\Psi(c)=(\langle\Theta_i,c\rangle)_{i=1,\dots,n},$$
given by evaluating the the 1-cocycles $\Theta_1,\dots,\Theta_n$ in Proposition \ref{prop:cocycles} on singular chains. By Lemma \ref{lemma-aj}, the image of $Z_1(X;\Z)$ under $\tilde\Psi$ is contained in $\Z^n$. In particular, $\tilde\Psi$ induces a map
$$\Psi':C_1(X;\R)/Z_1(X;\Z)\to\R^n/\Z^n$$
In order to obtain a map defined on $X$ we choose a base point $x_0$. Suppose that $x\in X$ is another point and that $I$ and $I'$ are two paths starting at $x_0$ and ending in $x$. Considering both paths as 1-chains we have $\Psi'(I)=\Psi'(I')$ because $I-I'$ is a 1-cycle. It follows that the value $\Psi'(I)$ does not depend on the chosen path $I$ but only on the point $x$. We obtain hence a well-defined map
$$\Psi:X\to\R^n/\Z^n.$$
We will refer to $\Psi$ as {\em the Abel-Jacobi map}. 

We start investigating how the action of $G$ on $X$ is reflected by the Abel-Jacobi map. In order to do so we consider $\R^n/\Z^n$ as an abelian group.

\begin{lemma}\label{lem:affine}
For all $g\in G$ there is $A_g\in\R^n/\Z^n$ with 
$$\Psi(g(x))=\delta(g)\Psi(x)+A_g$$
for all $x\in X$.
\end{lemma}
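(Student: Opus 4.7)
The plan is to compute $\Psi(g(x))$ by choosing a convenient path from the basepoint $x_0$ to $g(x)$ and then using the transformation rule $g^\#(\Theta_i)=\delta(g)\Theta_i$ from Proposition \ref{prop:cocycles}(2).

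Concretely, I would first fix $g\in G$ and select, once and for all, a singular $1$-chain $J_g \in C_1(X;\R)$ representing a path from $x_0$ to $g(x_0)$. For an arbitrary $x\in X$, pick a path $I$ from $x_0$ to $x$ and consider the chain $J_g + g_\#(I)$; this is a path from $x_0$ to $g(x)$, so by the very definition of the Abel-Jacobi map,
\[
\Psi(g(x)) = \bigl[\tilde\Psi(J_g+g_\#(I))\bigr] = \bigl[\tilde\Psi(J_g)\bigr] + \bigl[\tilde\Psi(g_\#(I))\bigr] \in \R^n/\Z^n.
\]
The key calculation is then the middle term: using \eqref{eq-bla} and Proposition \ref{prop:cocycles}(2), the $i$-th coordinate of $\tilde\Psi(g_\#(I))$ is
\[
\langle\Theta_i,g_\#(I)\rangle = \langle g^\#(\Theta_i),I\rangle = \delta(g)\langle\Theta_i,I\rangle,
\]
so $\tilde\Psi(g_\#(I)) = \delta(g)\tilde\Psi(I)$, which gives $\bigl[\tilde\Psi(g_\#(I))\bigr] = \delta(g)\Psi(x)$. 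Setting $A_g := [\tilde\Psi(J_g)] \in \R^n/\Z^n$ yields the desired identity $\Psi(g(x))=\delta(g)\Psi(x)+A_g$.

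The only point that requires a brief justification is that $A_g$ is independent of the auxiliary path $J_g$, which is needed to make the definition of $\Psi$ meaningful on both sides of the equation. If $J_g$ and $J_g'$ are two such paths, their difference $J_g-J_g'$ is a $1$-cycle in $X$, and its push-forward to $X/H=\bT^n$ is also a $1$-cycle; hence by Lemma \ref{lemma-aj} each coordinate $\langle\Theta_i,J_g-J_g'\rangle$ is an integer, so $[\tilde\Psi(J_g)]=[\tilde\Psi(J_g')]$ in $\R^n/\Z^n$. No step here is subtle; the work has already been done in Propositions \ref{prop:cocycles} and Lemma \ref{lemma-aj}, and the only thing to check is that the pieces fit together correctly, with property (2) of the cocycles producing exactly the sign $\delta(g)$ claimed in the statement.
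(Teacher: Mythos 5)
Your argument is essentially identical to the paper's: both decompose a path from $x_0$ to $g(x)$ as a path from $x_0$ to $g(x_0)$ followed by $g_\#$ of a path from $x_0$ to $x$, then apply Proposition \ref{prop:cocycles}(2) via the adjunction $\langle\Theta_i,g_\#(I)\rangle=\langle g^\#\Theta_i,I\rangle$ to produce the factor $\delta(g)$, with $A_g=\Psi(g(x_0))$. The closing well-definedness check is correct but redundant, since $[\tilde\Psi(J_g)]=\Psi(g(x_0))$ is already well defined by the construction of $\Psi$ in Section \ref{sec:abel-jacobi}.
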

\begin{proof}
For $x\in X$ denote by $I_x$ a path from $x_0$ to $x$. In order to compute $\Psi(g(x))$ we consider the juxtaposition $I_{g(x_0)}+g(I_x)$ of $I_{g(x_0)}$ and $g(I_x)$. We have thus
$$\Psi(g(x))=\Psi'(I_{g(x_0)}+g(I_x))=\Psi'(I_{g(x_0)})+\Psi'(g(I_x))=\Psi(g(x_0))+\Psi'(g(I_x)).$$
We set $A_g=\Psi(g(x_0))$ and compute
$$\Psi'(g(I_x))=(\langle\Theta_i,gI_x\rangle)_{i=1,\dots,n}.$$
We have for $i=1,\dots,n$
$$\langle\Theta_i,g(I_x)\rangle=\langle\Theta_i,g_\#(I_x)\rangle=\langle g^\#\Theta_i,I_x\rangle=
\delta(g)\langle\Theta_i,I_x\rangle,$$
where the last equation holds by Proposition \ref{prop:cocycles}. The claim follows.
\end{proof}

Clearly, the map
\begin{equation}\label{eq-action}
G\times\R^n/\Z^n\to\R^n/\Z^n,\ \ (g,x)\mapsto\delta(g) x+A_g,
\end{equation}
is an action of $G$ on $\R^n/\Z^n$. We will see next that this action is trivial when restricted to the subgroup $H$. 

Notice that for any $h\in H$ the path $I_{h(x_0)}$ projects to a closed loop in $X/H$. In other words, $\pi_\#(I_{h(x_0)})$ is a 1-cycle in $X/H=\bT^n$. It follows from Lemma \ref{lemma-aj} that $\tilde\Psi(I_{h(x_0)})\in\Z^n$, meaning that
$$A_h=\tilde\Psi(I_{h(x_0)})\mod\Z^n=0.$$
Since $H\subset\Ker(\delta)$ we deduce from Lemma \ref{lem:affine} that in fact the action of $H$ on $\R^n/\Z^n$ is trivial.

\begin{lemma}\label{lem:trivial-H}
The restriction of the $G$-action \eqref{eq-action} to $H$ is trivial.\qed
\end{lemma}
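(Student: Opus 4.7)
The plan is to verify directly that the two ingredients of the formula in \eqref{eq-action} become trivial when we restrict to $h \in H$: namely, that $\delta(h) = 1$ and that $A_h = 0$ in $\R^n/\Z^n$.

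The first part is immediate from the setup: by the definition of $\delta$ just after \eqref{eq-sign}, we have $H \subset \Ker(\delta)$, so $\delta(h) = 1$ for every $h \in H$. Consequently, for $h \in H$ the action \eqref{eq-action} reduces to the translation $x \mapsto x + A_h$, and the task is to show that this translation is trivial.

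For the second part, I would fix $h \in H$ and examine the path $I_{h(x_0)}$ from $x_0$ to $h(x_0)$ used in the definition of $A_h = \Psi(h(x_0))$. Because $\pi \colon X \to X/H = \bT^n$ identifies $x_0$ with $h(x_0)$, the push-forward $\pi_\#(I_{h(x_0)})$ is a closed loop in $\bT^n$, hence an integral $1$-cycle. Lemma \ref{lemma-aj} then applies and gives $\langle \Theta_i, I_{h(x_0)}\rangle \in \Z$ for each $i = 1,\dots,n$. Consequently
\[
\tilde\Psi(I_{h(x_0)}) = \bigl(\langle \Theta_i, I_{h(x_0)}\rangle\bigr)_{i=1,\dots,n} \in \Z^n,
\]
so $A_h = \tilde\Psi(I_{h(x_0)}) \bmod \Z^n = 0$ in $\R^n/\Z^n$.

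Combining the two, the action of $h$ on $\R^n/\Z^n$ is $x \mapsto \delta(h) x + A_h = x$, proving that the restriction of the $G$-action to $H$ is trivial. There is no real obstacle here; the only point that deserves care is making sure the chain $I_{h(x_0)}$ has integer coefficients so that Lemma \ref{lemma-aj} genuinely applies, which is fine since $I_{h(x_0)}$ is a single path and hence an element of $C_1(X;\Z)$.
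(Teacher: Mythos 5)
Your proof is correct and follows exactly the same route as the paper: you note $H\subset\Ker(\delta)$ to kill the $\delta(h)$ factor, and you observe that $\pi_\#(I_{h(x_0)})$ is a closed loop in $X/H$ so that Lemma \ref{lemma-aj} gives $A_h = \tilde\Psi(I_{h(x_0)}) \in \Z^n$, hence $A_h=0$ modulo $\Z^n$.
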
 

Lemma \ref{lem:trivial-H} implies directly that the Abel-Jacobi map $\Psi$ descends to a map
$$\hat\Psi:X/H\to\R^n/\Z^n.$$
We claim now that $\hat\Psi$ is a homotopy equivalence. In order to see that this is the case let $c_1$ be a loop in $X/H=\bT^n$ representing the homology class $[c_1]\in H_1(X/H;\Z)=\pi_1(X/H)$ chosen in the previous section. Suppose also that $c_1$ is based at $\pi(x_0)$ and avoids $Z_f$, the preimage of the branching locus of $f$. This last condition implies that there is a unique lift $\tilde c_1$ of $c_1$ starting at $x_0$. Applying Proposition \ref{prop:cocycles} (3) to $\tilde c_1$ we obtain that 
$$\tilde\Psi(\tilde c_1)=(1,0,\ldots,0).$$
Identifying $\pi_1(\R^n/\Z^n)$ with the deck-transformation group $\Z^n$ of the cover $\R^n\to\R^n/\Z^n$ we have that the image $\hat\Psi(c_1)$ of $c_1$ is homotopic to the first standard generator of $\pi_1(\R^n/\Z^n)\simeq\Z^n$. Arguing in the same way for $c_2,\ldots, c_n$ we have proved that the homomorphism
$$\pi_1(\hat\Psi):\pi_1(X/H,\pi(x_0))\to\pi_1(\R^n/\Z^n,\hat\Phi(\pi(x_0)))$$
induced by $\hat\Psi$ maps the basis $[c_1],\dots,[c_n]$ to the standard basis of $\Z^n=\pi_1(\R^n/\Z^n)$. In other words, $\pi_1(\hat\Psi)$ is an isomorphism. Since both $X/H$ and $\R^n/\Z^n$ are tori, and hence have contractible universal coverings, this implies that $\Psi\colon X/H\to\R^n/\Z^n$ is a homotopy equivalence, as we wanted to show.

\begin{proposition}\label{prop:descend}
The Abel-Jacobi map $\Psi:X\to\R^n/\Z^n$ descends to a homotopy equivalence $\hat\Psi\colon\bT^n=X/H\to\R^n/\Z^n$.\qed
\end{proposition}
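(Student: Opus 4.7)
The plan is to split the proof into two independent parts: first show that $\Psi$ is constant on $H$-orbits so that it descends to a map $\hat\Psi\colon X/H\to\R^n/\Z^n$; then verify that $\hat\Psi$ is a homotopy equivalence by checking that it induces an isomorphism on fundamental groups.

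For the descent, I would apply Lemma \ref{lem:affine} to an arbitrary $h\in H$. Since $H\subset\Ker(\delta)$, the formula collapses to $\Psi(h(x))=\Psi(x)+A_h$ with $A_h=\Psi(h(x_0))$. The key observation is that any path $I_{h(x_0)}$ from $x_0$ to $h(x_0)$ in $X$ projects under $\pi$ to a \emph{closed} loop in $X/H$, so $\pi_\#(I_{h(x_0)})$ is a $1$-cycle in $X/H=\bT^n$. Lemma \ref{lemma-aj} then forces each coordinate $\langle\Theta_i,I_{h(x_0)}\rangle$ to be an integer, giving $A_h\equiv 0$ in $\R^n/\Z^n$. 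This is exactly the content of Lemma \ref{lem:trivial-H}, and so $\Psi$ is $H$-invariant and descends to $\hat\Psi$.

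For the homotopy equivalence, since $X/H\cong\bT^n$ and $\R^n/\Z^n$ are both aspherical (indeed $K(\Z^n,1)$'s with contractible universal covers), it suffices to verify that $\pi_1(\hat\Psi)$ is an isomorphism. I would check this on the preferred basis by choosing, for each $j$, a loop representative of $[c_j]$ based at $\pi(x_0)$ and disjoint from the branch set $B_f$; this is possible because $B_f$ has codimension at least $2$. Each such loop admits a unique lift $\tilde c_j$ to a path in $X$ starting at $x_0$, using the covering property of $\pi$ restricted to $X\setminus Z_f$. Then Proposition \ref{prop:cocycles}(3) immediately gives $\tilde\Psi(\tilde c_j)=e_j$, so that $\hat\Psi_*[c_j]=e_j$ under the standard identification $\pi_1(\R^n/\Z^n)=\Z^n$. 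Hence $\pi_1(\hat\Psi)$ sends the preferred basis of $\pi_1(\bT^n)$ to the standard basis of $\Z^n$, and is therefore an isomorphism. Whitehead's theorem, applied to aspherical spaces, then upgrades this to a homotopy equivalence.

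I do not foresee a significant obstacle: the cocycle averaging of Proposition \ref{prop:cocycles} and the integrality statement in Lemma \ref{lemma-aj} were engineered precisely to make both steps work. The only subtlety is making sure that paths and loops can be chosen to avoid the codimension-$2$ branch and singular loci, which is routine by standard general position in the PL category.
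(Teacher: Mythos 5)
Your argument matches the paper's proof step for step: the descent of $\Psi$ is exactly the content of Lemma~\ref{lem:trivial-H} (obtained from Lemma~\ref{lem:affine}, $H\subset\Ker(\delta)$, and Lemma~\ref{lemma-aj}), and the homotopy-equivalence claim is checked on $\pi_1$ by lifting the loops $c_j$ and invoking Proposition~\ref{prop:cocycles}(3), then using asphericity of the two tori. One small notational slip: the loops in $\bT^n=X/H$ should be chosen to avoid $Z_f=f^{-1}(B_f)\subset\bT^n$ rather than $B_f$ (which lives in $N$), and correspondingly the covering property is that of $\pi$ restricted to $X\setminus\pi_H^{-1}(Z_f)$, not $X\setminus Z_f$.
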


We can now start drawing a diagram to which we will continue adding arrows and objects in the next section:

$$\xymatrix{
 & X\ar[dl]_{\Psi}\ar[d]_\pi\ar[dr]^{\pi_G} \\
 \R^n/\Z^n & X/H=\bT^n\ar[l]_{\hat\Psi}\ar[r]^f & X/G=N
}
$$

\section{The subgroup $K\subset G$}\label{sec:pushdown}
Still with the same notation as in (*) define 
$$K=\Ker(\delta)$$ 
to be the kernel of the homomorphism $\delta$; see \eqref{eq-sign}. By construction $K$ is a normal subgroup of $G$ with at most index $2$. Notice that $H\subset K=\Ker(\delta)$ and that it follows from Corollary \ref{cor-inv} that $K=G$ if the dimension $n$ of the involved manifolds is odd.

\begin{remark}
In the next section, we will also prove that $K=G$ if $n$ is even.
\end{remark}

By Lemma \ref{lem:affine}, the action \eqref{eq-action} of $G$ on $\R^n/\Z^n$ restricts to an action of $K$ by translations. Observe that since $H\subset K$, this action is not effective by Lemma \ref{lem:trivial-H}. In spite of this, we will denote by $(\R^n/\Z^n)/K$ the quotient of $\R^n/\Z^n$ under the action by  translations of $K$. In this section we prove:

\begin{proposition}\label{prop:descend2}
The homotopy equivalence $\hat\Psi:X/H\to\R^n/\Z^n$ descends to a map $\bar\Psi:X/K\to(\R^n/\Z^n)/K$. In particular, the following holds:
\begin{enumerate}
\item The orbit map $p_1:\R^n/\Z^n\to(\R^n/\Z^n)/K$ has degree $\vert K/H\vert$.
\item $H$ is a normal subgroup of $K$.
\item The action of $K/H$ on $\R^n/\Z^n$ is effective and $(\R^n/\Z^n)/K$ is a torus.
\item The action of $K/H$ on $X/H$ is free. In particular, the space $X/K$ is in fact a manifold and the orbit map $p_2:X/H\to X/K$ is a covering map.
\item The map $\bar\Psi:X/K\to(\R^n/\Z^n)/K$ is a homotopy equivalence. 
\end{enumerate}
\end{proposition}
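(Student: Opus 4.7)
The plan is to build everything around the group homomorphism
$\alpha \colon K \to \R^n/\Z^n$, $k \mapsto A_k$, from Lemma \ref{lem:affine}, and in particular around its kernel $\hat H := \ker\alpha$.

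First, to obtain the descent itself, I observe that any $k \in K$ satisfies $\delta(k)=1$, so Lemma \ref{lem:affine} gives $\Psi(kx)=\Psi(x)+A_k$. Consequently $p_1\circ\hat\Psi$ is constant on $K$-orbits in $X$ and thus factors as $\bar\Psi \circ p_2$, giving a well-defined $\bar\Psi \colon X/K \to (\R^n/\Z^n)/K$. By Lemma \ref{lem:trivial-H} the subgroup $H$ is contained in $\hat H$, and $\hat H$, being a kernel, is normal in $K$. The image of $\alpha$ is a finite subgroup $\Lambda\subset\R^n/\Z^n$, of the form $\Lambda=\Lambda_1/\Z^n$ for some lattice $\Z^n\subset\Lambda_1\subset\R^n$; hence $(\R^n/\Z^n)/K=\R^n/\Lambda_1$ is a torus, $p_1$ is a covering of degree $[K:\hat H]$, and the translation action of $K/\hat H$ on $\R^n/\Z^n$ is free.

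The central step, and the main technical obstacle, is to show $\hat H = H$. For this I would chase degrees around the commutative square involving $\hat\Psi$, $p_1$, $p_2$ and $\bar\Psi$. The map $p_2$ has degree $[K:H]$, since $\pi_K = p_2 \circ \pi_H$ and the orbit maps $\pi_H$ and $\pi_K$ have degrees $|H|$ and $|K|$; the map $\hat\Psi$ is a homotopy equivalence, so has degree $\pm 1$; and $p_1$ has degree $[K:\hat H]$. Equating the degree of $p_1 \circ \hat\Psi$ with that of $\bar\Psi \circ p_2$ yields
\[
[K:H]\cdot\deg\bar\Psi \;=\; \pm[K:\hat H].
\]
The delicate point is that $\bar\Psi$ maps from the polyhedron $X/K$ into a manifold, so one must argue that $\deg\bar\Psi$ is an integer; this follows from the fact that $H^n(X/K;\Z)$ has rank one (since $H^n(X/K;\R)=\R$ and $X/K$ is $n$-dimensional), so that the pullback of the integral fundamental class of the torus $(\R^n/\Z^n)/K$ under $\bar\Psi$ is an integer multiple of a free generator modulo torsion. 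Combined with the divisibility $[K:\hat H]\mid[K:H]$ coming from $H\subset\hat H$, this forces $[K:H]=[K:\hat H]$, hence $H=\hat H$. This establishes (1) and (2), and finishes (3) since the translation action of $K/H=K/\hat H$ on $\R^n/\Z^n$ is now known to be effective.

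For (4), with $H$ normal in $K$ the quotient group $K/H$ acts on $\bT^n = X/H$ by $k\cdot[x]_H = [kx]_H$, and $\hat\Psi$ intertwines this action with the translation action of $K/H$ on $\R^n/\Z^n$; since the latter is free, so is the former (a fixed point $[x]_H$ would force $A_k=0$ and thus $k\in\hat H=H$). Therefore $X/K=\bT^n/(K/H)$ is a closed manifold and $p_2$ is a covering of degree $[K:H]$. Finally, (5) follows from the fact that both $X/K$ and $(\R^n/\Z^n)/K$ are aspherical (with universal cover $\R^n$); applying the five lemma to the short exact sequences
\[
1 \to \pi_1(\bT^n) \to \pi_1(X/K) \to K/H \to 1 \quad \text{and} \quad 1 \to \pi_1(\R^n/\Z^n) \to \pi_1((\R^n/\Z^n)/K) \to K/H \to 1,
\]
together with $\hat\Psi_\ast$ being an isomorphism on kernels and $\bar\Psi$ respecting the deck group $K/H$, shows that $\bar\Psi_\ast$ is an isomorphism on $\pi_1$, and hence that $\bar\Psi$ is a homotopy equivalence.
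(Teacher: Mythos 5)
Your proof is correct and follows essentially the same route as the paper: descend $\hat\Psi$ using Lemma \ref{lem:affine}, run the degree chase around the square $p_1\circ\hat\Psi=\bar\Psi\circ p_2$ to identify $H$ with the kernel of $K\to\Aut(\R^n/\Z^n)$, deduce freeness of $K/H\curvearrowright X/H$ from the intertwining with the translation action, and get (5) from the five lemma applied to the two short exact sequences of fundamental groups. Your only genuine addition is making explicit that $\deg\bar\Psi$ is an integer (via the rank-one free part of $H^n(X/K;\Z)$), a point the paper uses implicitly; this is a small but welcome clarification rather than a different approach.
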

\begin{proof}
The fact that $\hat\Psi:X/H\to\R^n/\Z^n$ descends to a map $\bar\Psi:X/K\to(\R^n/\Z^n)/K$ follows just from the definitions and is left to the reader. Notice that since $K$ acts on the $n$-dimensional torus $\R^n/\Z^n$ by translations, the quotient is $n$-dimensional a torus as well. This proves the second part of (3).

At this point we can enlarge the diagram above as follows:
\begin{equation}\label{eq-diag}
\xymatrix{
 & X\ar[dl]_{\Psi}\ar[d]_\pi\ar[ddr]^{\pi_G} \\
 \R^n/\Z^n\ar[d]^{p_1} & X/H=\bT^n\ar[l]_{\hat\Psi}\ar[dr]^f\ar[d]^{p_2} & \\
 (\R^n/\Z^n)/K & X/K\ar[l]_{\bar\Psi}\ar[r]^{p_3} & X/G=N
}
\end{equation}
where $p_1$, $p_2$ and $p_3$ are the obvious orbit maps.

Since the square in the left bottom corner of the diagram commutes we have that
$$\deg(p_1)\deg(\hat\Psi)=\deg(p_1\circ\hat\Psi)=\deg(\bar\Psi\circ p_2)=\deg(\bar\Psi)\deg(p_2).$$
Taking into account that $\hat\Psi$ is a homotopy equivalence and that $p_2$ has degree $\vert K/H\vert$ we obtain
$$\deg(p_1)=\vert K/H\vert\deg(p_2).$$
On the other hand, $p_1$ has positive degree. This implies that $p_2$ has to have positive degree as well; hence
$$\deg(p_1)\ge \vert K/H\vert.$$
On the other hand, $\deg(p_1)\le \vert K/H\vert$ because the subgroup $H$ of $K$ acts trivially on $\R^n/\Z^n$. We have proved (1).

Recall that $H$ is contained in the kernel of the homomorphism 
$$K\to\Aut(\R^n/\Z^n)$$
by Lemma \ref{lem:trivial-H}. The image of this homomorphism has $\deg(p_1)=\vert K/H\vert$ elements. It follows that $H$ is in fact precisely the kernel of this homomorphism and that it is therefore normal in $K$. We have proved (2) and the first part of (3).

Since $H$ is normal in $K$ we have that $K/H$ acts on $X/H$ and that $X/K=(X/H)/(K/H)$. We claim that the action of $K/H$ on $X/H$ is free. In order to see this, suppose that we have $x\in X/H$ and $k\in K/H$ with $kx=x$. Then we have 
$$\hat\Psi(x)=\hat\Psi(kx)=k\hat\Psi(x).$$
Since the action of $K/H$ on $\R^n/\Z^n$ is effective and by translations, an element $k\in K/H$ can only fix the point $\Psi(x)\in\R^n/\Z^n$ if $k$ is the neutral element in $K/H$. We have proved (4).

It remains to show that the map $\bar\Psi:X/K\to(\R^n/\Z^n)/K$ is a homotopy equivalence. As in the proof of Proposition \ref{prop:descend}, it suffices to show that $(\pi_1)_*(\bar\Psi):\pi_1(X/K)\to\pi_1((\R^n/\Z^n)/K)$ is an isomorphism. Notice at this point that we have the following commutative diagram:
$$\xymatrix{
1 \ar[r] & \pi_1(X/H) \ar[d]_{(\pi_1)_*(\hat\Psi)}\ar[r] & \pi_1(X/K) \ar[d]_{(\pi_1)_*(\bar\Psi)}\ar[r] & K/H\ar[d]_{\id} \ar[r] & 1 \\
1\ar[r] & \pi_1(\R^n/\Z^n) \ar[r] & \pi_1((\R^n/\Z^n)/K) \ar[r] & K/H \ar[r] & 1
}$$
Since the first and third vertical arrows are isomorphisms it follows that the middle arrow is an isomorphism as well. This concludes the proof of Proposition \ref{prop:descend2}.
\end{proof}

We conclude this section with some remarks needed to prove Theorem \ref{thm:main} below. The following is just a direct consequence of parts (3) and (5) of Proposition \ref{prop:descend2}:

\begin{corollary}\label{cor:X/G}
The manifold $X/K$ is homotopy equivalent to a $n$-dimensional torus. In particular, $\pi_1(X/K)=H_1(X/K;\Z)=\Z^n$ and $H^*(X/K;\R)=H^*(\bT^n;\R)$.\qed
\end{corollary}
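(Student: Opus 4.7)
The plan is to read the corollary off directly from the pieces of Proposition \ref{prop:descend2} that have just been established. First, I will invoke part (4) of that proposition to record that $X/K$ really is a manifold, which is what makes it meaningful to speak of its being an $n$-manifold with an honest homotopy type. Next, part (3) identifies $(\R^n/\Z^n)/K$ as an $n$-dimensional torus: since $K$ acts on $\R^n/\Z^n$ by translations with kernel exactly $H$, the induced action of $K/H$ is by a finite group of translations, so the quotient is $\R^n$ modulo a lattice $\Lambda\supset\Z^n$ of finite index, hence again an $n$-torus. Finally, part (5) supplies a homotopy equivalence $\bar\Psi\colon X/K\to(\R^n/\Z^n)/K$. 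Stringing these together, $X/K$ has the homotopy type of $\bT^n$.

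Once the homotopy equivalence $X/K\simeq\bT^n$ is in hand, the remaining assertions $\pi_1(X/K)=H_1(X/K;\Z)=\Z^n$ and $H^*(X/K;\R)\cong H^*(\bT^n;\R)$ are immediate from the homotopy invariance of the fundamental group, singular homology, and singular cohomology, together with the well-known computation of these invariants on the torus. There is no substantive obstacle here: the corollary is purely a bookkeeping statement, packaging the consequences of Proposition \ref{prop:descend2} in the form that will be convenient to cite later, and the only nontrivial input beyond what has already been proved is the elementary observation that the quotient of an $n$-torus by a finite group acting by translations is again an $n$-torus.
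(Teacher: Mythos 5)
Your argument is correct and is essentially the paper's own: the authors state that the corollary is "a direct consequence of parts (3) and (5) of Proposition~\ref{prop:descend2}," and your assembly of part (3) (the quotient $(\R^n/\Z^n)/K$ is an $n$-torus), part (5) (the homotopy equivalence $\bar\Psi$), and, for good measure, part (4) (that $X/K$ is a manifold) is exactly that bookkeeping.
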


Recall now that $K\subset G$ is normal of at most index $2$ because it is the kernel of the homomorphism \eqref{eq-sign}. In particular, the group $G/K$ acts on $X/K$ with $X/G=(X/K)/(G/K)$ and the map $p_3:X/K\to X/G=N$ in \eqref{eq-diag} is just the orbit map. We prove now:

\begin{lemma}\label{lem:either-or}
If $K\neq G$ denote by $\sigma$ the non-trivial element in $G/K$. Then $\sigma$ acts as $-\id$ on $H^1(X/K;\R)$ and hence $H^1(N;\R)=0$.
\end{lemma}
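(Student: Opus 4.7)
The plan is to translate statements about $H^1(X/K;\R)$ into statements about $H^1(X;\R)^K$ via Proposition \ref{prop:invco}, and then apply Corollary \ref{cor-inv}. The main (minor) obstacle is to check that $H^1(X;\R)^K$ in fact coincides with $H^1(X;\R)^H$, so that Corollary \ref{cor-inv} applies directly.

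First I would use Proposition \ref{prop:invco} to identify $H^1(X/K;\R)$ with $H^1(X;\R)^K$ via the pull-back of the orbit map $X\to X/K$. Under this identification, the natural action of $\sigma=gK\in G/K$ on $H^1(X/K;\R)$ corresponds to the restriction of $g^*$ to $H^1(X;\R)^K$, for any representative $g\in G$ of $\sigma$ (the choice of representative is irrelevant because $K$ acts trivially on $H^1(X;\R)^K$).

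Next I would show that $H^1(X;\R)^K=H^1(X;\R)^H$. Since $H\subset K$ the inclusion $H^1(X;\R)^K\subset H^1(X;\R)^H$ is immediate. On the other hand, combining Proposition \ref{prop:invco} with $X/H=\bT^n$ gives $\dim H^1(X;\R)^H=n$, while combining Proposition \ref{prop:invco} with Corollary \ref{cor:X/G} gives $\dim H^1(X;\R)^K=\dim H^1(X/K;\R)=n$. The two spaces therefore coincide.

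Now, by the assumption $K\neq G$, I can choose $g\in G$ with $\delta(g)=-1$. Corollary \ref{cor-inv} asserts that $g^*$ acts on $H^1(X;\R)^H$ as multiplication by $\delta(g)=-1$. Combining with the first two steps, this shows that $\sigma$ acts as $-\id$ on $H^1(X/K;\R)$, which is the first assertion of the lemma.

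For the final assertion, note that since $K$ is normal in $G$ with $G/K=\langle\sigma\rangle$, we have $N=X/G=(X/K)/\langle\sigma\rangle$. Applying Proposition \ref{prop:invco} once more,
\[
H^1(N;\R)=H^1(X/K;\R)^{\langle\sigma\rangle}=\{v\in H^1(X/K;\R)\mid -v=v\}=0,
\]
which completes the argument.
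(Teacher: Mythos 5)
Your proof is correct, but you take a somewhat different route to the first claim than the paper does. The paper argues geometrically: it observes that a representative $\tilde\sigma\in G$ of $\sigma$ acts on $\R^n/\Z^n$ by $x\mapsto -x+A_{\tilde\sigma}$ (this is the very definition of the action \eqref{eq-action} when $\delta(\tilde\sigma)=-1$), hence acts as $-\id$ on $H^1(\R^n/\Z^n;\R)$; since $K$ acts by translations, the induced action on $H^1((\R^n/\Z^n)/K;\R)$ is also $-\id$; and the $G/K$-equivariance of the homotopy equivalence $\bar\Psi$ then transports this conclusion to $H^1(X/K;\R)$. You instead argue algebraically: you identify $H^1(X/K;\R)$ with $H^1(X;\R)^K$, note by dimension counting (using Corollary \ref{cor:X/G} for $\dim H^1(X;\R)^K=n$ and $X/H=\bT^n$ for $\dim H^1(X;\R)^H=n$) that $H^1(X;\R)^K=H^1(X;\R)^H$, and then invoke Corollary \ref{cor-inv} to conclude that any $g\in G\setminus K$ acts by $\delta(g)=-1$. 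Both arguments ultimately lean on Proposition \ref{prop:descend2} (the paper directly via $\bar\Psi$, you indirectly via Corollary \ref{cor:X/G}), so the logical dependencies are comparable; your version avoids having to inspect the equivariance of the Abel-Jacobi map a second time, at the modest cost of an extra identification step. The derivation of $H^1(N;\R)=0$ from the first claim is the same in both. One small point worth making explicit in your write-up: the induced map $g^*$ preserves $H^1(X;\R)^K$ because $K$ is normal in $G$, which is what makes the identification of the $\sigma$-action with $g^*\vert_{H^1(X;\R)^K}$ legitimate (this also justifies your remark that the choice of representative $g$ is irrelevant).
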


We suggest the reader to compare with Corollary \ref{cor-cool}.

\begin{proof}
Denote by $\tilde\sigma\in G$ a representative of $\sigma$ in $G$. Notice that, by the very definition of the action \eqref{eq-action}, $\tilde\sigma$ acts on $H^1(\R^n/\Z^n;\R)$ by $-\id$. Since $K$ acts on $\R^n/\Z^n$ by translations, it follows that $\sigma$ also acts on $H^1((\R^n/\Z^n)/K;\R)$ by $-\id$. Since the homotopy equivalence $\bar\Psi$ satisfies $\bar\Psi(\sigma(x))=\sigma(\bar\Psi(x))$, we deduce that $\sigma$ acts on $H^1(X/K;\R)$ as $-\id$ as well. This proves the first claim. In order to prove the second one observe that
$$H^1(N;\R)=H^1((X/K)/(G/K);\R)=H^1(X/K;\R)^{G/K}=0.$$
This concludes the proof of Lemma \ref{lem:either-or}.
\end{proof}

\section{The final step of the proof of Theorem \ref{thm:main}}\label{sec:final}
In this section we conclude the proof of Theorem \ref{thm:main}; as always, notation is as in (*). We copy \eqref{eq-diag} here for the convenience of the reader:
 $$\xymatrix{
 & X\ar[dl]_{\Psi}\ar[d]_\pi\ar[ddr]^{\pi_G} \\
 \R^n/\Z^n\ar[d]^{p_1} & X/H=\bT^n\ar[l]_{\hat\Psi}\ar[dr]^f\ar[d]^{p_2} & \\
 (\R^n/\Z^n)/K & X/K\ar[l]_{\bar\Psi}\ar[r]^{p_3} & X/G=N
}
$$
So far, we know that $f=p_3\circ p_2$ and that $p_2$ is a covering map. In order to prove that $f$ is itself a covering map it suffices to show that $p_3=\id$. Since $p_3$ is the orbit map $X/K\to(X/K)/(G/K)$ it suffices to show:

\begin{proposition}\label{final-prop}
$G/K$ is the trivial group.
\end{proposition}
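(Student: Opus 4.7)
I would argue by contradiction. Assume $G\neq K$, so $[G:K]=2$; by Corollary~\ref{cor-cool}, this forces $n$ to be even (otherwise the sign homomorphism $\delta$ is trivial and $K=G$). By Proposition~\ref{prop:deg12}, the degree-$2$ branched cover $p_3:X/K\to N$ is the orbit map of an orientation-preserving PL-involution $\sigma:X/K\to X/K$, and by Lemma~\ref{lem:either-or}, $\sigma^*$ acts on $H^1(X/K;\R)$ as $-\id$. Since $X/K$ is homotopy equivalent to $\bT^n$ by Corollary~\ref{cor:X/G}, its cohomology ring is the exterior algebra on $H^1$, so $\sigma^*$ acts on $H^k(X/K;\R)$ as $(-1)^k$. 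A direct computation of the Lefschetz number gives
\[
L(\sigma)=\sum_{k=0}^n (-1)^k(-1)^k\binom{n}{k}=2^n\neq 0,
\]
so by the Lefschetz fixed-point theorem $\Fix(\sigma)\neq\emptyset$.

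The contradiction will come from showing, via Smith theory applied in the universal cover, that every component of $\Fix(\sigma)$ is $\Z_2$-acyclic, violating Proposition~\ref{fix-set}. Let $p\colon\widetilde{X/K}\to X/K$ be the universal cover; since $X/K$ is aspherical with $\pi_1=\Z^n$, the space $\widetilde{X/K}$ is contractible. Fix a lift $\tilde\sigma$ of $\sigma$: it is a PL-homeomorphism of $\widetilde{X/K}$ with $\tilde\sigma^2\in\Z^n$, and it conjugates $\gamma\in\Z^n$ to $-\gamma$ because $\sigma$ acts on $\pi_1(X/K)$ as $-\id$. Applying this relation to $\gamma=\tilde\sigma^2$ itself gives $\tilde\sigma^2=-\tilde\sigma^2$, and torsion-freeness of $\Z^n$ forces $\tilde\sigma^2=0$. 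Thus $\tilde\sigma$ is an involution of $\widetilde{X/K}$, and an analogous argument shows that every lift $\gamma_0\tilde\sigma$ ($\gamma_0\in\Z^n$) is an involution, with conjugation formula $\gamma(\gamma_0\tilde\sigma)\gamma^{-1}=(\gamma_0+2\gamma)\tilde\sigma$.

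The crucial step is now the following. Since two involutions sharing a fixed point coincide, the fix sets $\Fix(\gamma_0\tilde\sigma)$ are pairwise disjoint for distinct $\gamma_0$, and by Smith's theorem applied to a $\Z/2$-action on the contractible (hence $\Z_2$-acyclic) space $\widetilde{X/K}$, each non-empty $\Fix(\gamma_0\tilde\sigma)$ is $\Z_2$-acyclic, in particular connected. Given a component $\Sigma$ of $\Fix(\sigma)$, every connected component of $p^{-1}(\Sigma)\subset\widetilde{X/K}$ therefore coincides with some $\Fix(\gamma_0\tilde\sigma)$. The stabilizer of such a component under the deck action of $\Z^n$ consists of those $\gamma$ satisfying $\gamma_0+2\gamma=\gamma_0$, i.e.\ $2\gamma=0$, which in the torsion-free group $\Z^n$ forces $\gamma=0$. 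Consequently $p$ restricts to a homeomorphism from each connected component of $p^{-1}(\Sigma)$ onto $\Sigma$, and $\Sigma$ inherits the $\Z_2$-acyclicity of $\Fix(\gamma_0\tilde\sigma)$. This contradicts Proposition~\ref{fix-set}, refuting the assumption $G\neq K$. The main delicacy will be bookkeeping the decomposition of $p^{-1}(\Fix(\sigma))$ into fix sets of the various lifts $\gamma_0\tilde\sigma$, but this is a direct consequence of the semidirect-product calculation with $\tilde\sigma$ above.
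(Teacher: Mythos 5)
Your argument is correct, and it replaces the paper's key intermediate step with a genuinely different one. The paper establishes that each component $\Sigma$ of $\Fix(\sigma)$ lifts homeomorphically to the universal cover by a \emph{cohomological} argument: it first proves (Claim~\ref{homology-trivial}) that $H_1(\Fix(\sigma);\R)\to H_1(X/K;\R)$ is zero, using Poincar\'e duality together with $H^{n-1}(N;\R)\simeq H^1(N;\R)=0$ from Lemma~\ref{lem:either-or}, and then uses the injectivity of $\pi_1(X/K)\to H_1(X/K;\R)$ to conclude that $\pi_1(\Sigma)\to\pi_1(X/K)$ is trivial. You instead work entirely in the deck group extension $1\to\Z^n\to\Gamma\to\Z/2\to 1$: the relation $\tilde\sigma\gamma\tilde\sigma^{-1}=-\gamma$ forces every lift $\gamma_0\tilde\sigma$ to be an involution (set $\gamma=\tilde\sigma^2$ and use torsion-freeness), and the conjugation formula $\gamma(\gamma_0\tilde\sigma)\gamma^{-1}=(\gamma_0+2\gamma)\tilde\sigma$ together with the fact that distinct elements over $\sigma$ have disjoint fixed sets (because $\Z^n$ acts freely) shows that the $\Z^n$-stabilizer of each component $\Fix(\gamma_0\tilde\sigma)$ of $p^{-1}(\Fix(\sigma))$ is trivial. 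Both routes then feed the same two ingredients — Smith theory in the contractible cover $\widetilde{X/K}$, yielding $\Z_2$-acyclicity, and Proposition~\ref{fix-set}, forbidding $\Z_2$-acyclic fixed components — to reach the contradiction. Your approach has the mild advantage of avoiding Poincar\'e duality and the separate homological claim; the paper's has the advantage of isolating a topological statement about $\Fix(\sigma)$ that is of independent interest. One small point worth adding for completeness: Proposition~\ref{fix-set} is stated for $n\ge 3$, and the paper treats $n=2$ by a separate citation. In your setting this case never arises — if $K\neq G$ and $n=2$, Lemma~\ref{lem:either-or} gives $H^1(N;\R)=0\neq 2=\dim H^1(\bT^2;\R)$, contradicting the standing hypothesis with $r=1$ — but you should say so explicitly rather than appeal to Proposition~\ref{fix-set} outside its stated range.
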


Assuming Proposition \ref{final-prop} we conclude the proof of Theorem \ref{thm:main}:

\begin{named}{Theorem \ref{thm:main}}
Let $N$ be a closed, connected, and oriented $n$-manifold, $n\ge 2$ so that $\dim H^r(N;\R)=\dim H^r(\bT^n;\R)$ for some $1\le r < n$.
Then every branched cover $f\colon \bT^n \to N$ is a cover. In particular, every $\pi_1$-surjective branched cover $\bT^n \to N$ is a homeomorphism.
\end{named}

\begin{proof}
By Proposition \ref{final-prop} we have $K=G$ and hence $p_3=\id$. In particular, we have 
$$f=p_3\circ p_2=p_2.$$
By Proposition \ref{prop:descend2}, the map $p_2$, and hence $f$, is a covering map.
\end{proof}

It remains to prove the proposition:

\begin{proof}[Proof of Proposition \ref{final-prop}] 
Seeking a contradiction assume that $K\neq G$. Hence $G/K$ is the group of order two. As in Lemma \ref{lem:either-or}, denote by $\sigma$ the non-trivial element in $G/K$. 

We are going to derive a contradiction to $K\neq G$ studying the fixed point set $\Fix(\sigma)$ of $\sigma$. It follows from Corollary \ref{cor:X/G} and Lemma \ref{lem:either-or} that $\sigma$ acts on $H^s(X/K;\R)\simeq H^s(\bT^n;\R)$ as multiplication by $(-1)^s$. In particular, $\sigma:X/K\to X/K$ has Lefschetz number $L(\sigma)=2^n$. It follows from the Lefschetz fixed point theorem that 
\begin{equation}\label{eq-fixne}
\Fix(\sigma)\neq\emptyset.
\end{equation}

We prove now that the inclusion of $\Fix(\sigma)$ into $X/K$ is trivial on homology:

\begin{claim}\label{homology-trivial}
The map $H_1(\Fix(\sigma);\R)\to H_1(X/K;\R)$ is trivial.
\end{claim}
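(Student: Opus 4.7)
The strategy is straightforward and rests on confronting two descriptions of how $\sigma_*$ acts on the image of $H_1(\Fix(\sigma);\R)$ in $H_1(X/K;\R)$.

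First, I would observe that since $\sigma$ fixes every point of $\Fix(\sigma)$ pointwise, any singular $1$-cycle $c$ supported in $\Fix(\sigma)$ satisfies $\sigma_\#(c)=c$ at the level of chains, and hence $\sigma_*[\iota_*(c)]=[\iota_*(c)]$ in $H_1(X/K;\R)$, where $\iota \colon \Fix(\sigma)\hookrightarrow X/K$ denotes the inclusion. In particular, every class in the image of $\iota_* \colon H_1(\Fix(\sigma);\R)\to H_1(X/K;\R)$ is $\sigma_*$-invariant.

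Next, I would translate Lemma \ref{lem:either-or} from cohomology to homology. That lemma asserts that $\sigma$ acts on $H^1(X/K;\R)$ as $-\id$. Since $X/K$ is homotopy equivalent to an $n$-torus by Corollary \ref{cor:X/G}, the space $H_1(X/K;\R)$ is finite-dimensional and the universal coefficient pairing with $H^1(X/K;\R)$ is perfect; dualizing, $\sigma_*$ acts on $H_1(X/K;\R)$ also as $-\id$. Combining this with the previous paragraph, any class $\alpha=\iota_*[c]$ satisfies both $\sigma_*(\alpha)=\alpha$ and $\sigma_*(\alpha)=-\alpha$, whence $2\alpha=0$ and so $\alpha=0$, because we are working over $\R$.

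There is essentially no obstacle here: the only subtlety is the passage from the cohomological statement of Lemma \ref{lem:either-or} to the homological action, but this is immediate from the universal coefficient theorem since $H_*(X/K;\R)$ is finite-dimensional. The claim then follows in a single line.
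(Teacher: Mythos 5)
Your argument is correct, and it is genuinely different from the one in the paper, though both rest on the same starting observation: any class in the image of $H_1(\Fix(\sigma);\R)\to H_1(X/K;\R)$ is fixed by $\sigma_*$, since $\sigma$ acts as the identity on $\Fix(\sigma)$ at the chain level. From there the two proofs diverge. You dualize Lemma \ref{lem:either-or} to homology via the perfect pairing $H^1(X/K;\R)\times H_1(X/K;\R)\to\R$ to conclude $\sigma_*=-\id$ on $H_1(X/K;\R)$, and then simply note that a class fixed by both $+\id$ and $-\id$ vanishes over $\R$. The paper instead uses Poincar\'e duality on the closed manifold $X/K$: from a purported nonzero $\sigma$-invariant class $[\alpha]\in H_1(X/K;\R)$ it manufactures a nonzero $\sigma$-invariant class $[\beta]\in H^{n-1}(X/K;\R)$, and then derives a contradiction from $H^{n-1}(X/K;\R)^{\langle\sigma\rangle}\cong H^{n-1}(N;\R)\cong H^1(N;\R)=0$ (the last equality again coming from Lemma \ref{lem:either-or}). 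Your route is more elementary in that it only needs the universal coefficient theorem over a field (no appeal to Poincar\'e duality or to the fact that $X/K$ is a closed orientable manifold), and it reads in one line once the dualization is noted; the paper's version keeps the argument formulated in the invariant-cohomology language $H^*(X/\Gamma;\R)\cong H^*(X;\R)^\Gamma$ that dominates the rest of the proof, at the cost of a slightly longer chain of identifications. Either proof is acceptable here.
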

\begin{proof}
Seeking a contradiction suppose that $[\alpha]\in H_1(\Fix(\sigma);\R)$ is not trivial in $H_1(X/K;\R)$ and let $[\beta]\in H^{n-1}(X/K;\R)$ be the unique cohomology class with
$$\langle[\eta],[\alpha]\rangle=[\eta]\wedge[\beta]$$
for all $[\eta]\in H^1(X/K;\R)$. Since $[\alpha]$ is fixed by $\sigma$ we deduce that $[\beta]$ is also fixed by $\sigma$. This implies that
$$H^{n-1}(N;\R)=H^{n-1}((X/K)/\langle\sigma\rangle;\R)=H^{n-1}(X/K;\R)^{\langle\sigma\rangle}\neq 0$$
contradicting that $H^{n-1}(N;\R)\simeq H^1(N;\R)=0$ by Lemma \ref{lem:either-or}.
\end{proof}

By Corollary \ref{cor:X/G}, we can identify $H_1(X/K;\Z)$, $\pi_1(X/K)$, and $\Z^n$. In particular, the homomorphism $\pi_1(X/K)\to H_1(X/K;\R)$ is injective. Given now a connected component $\Sigma$ of $\Fix(\sigma)$ we have the following commutative diagram:
$$\xymatrix{
\pi_1(\Sigma)\ar[d]\ar[r] & \pi_1(X/K)\ar[d] \\
H_1(\Sigma;\R)\ar[r] & H_1(X/K;\R)
}$$
As we just observed the right vertical arrow is injective. On the other hand, the lower horizontal arrow is trivial by Claim \ref{homology-trivial}. This implies that the homomorphism $\pi_1(\Sigma)\to\pi_1(X/K)$ is trivial as well. In other words, $\Sigma$ lifts homeomorphically to the universal cover $\widetilde{X/K}$ of $X/K$. We have proved:

\begin{claim}\label{homotopy-trivial}
Every connected component $\Sigma$ of $\Fix(\sigma)$ lifts homeomorphically to the universal cover $\widetilde{X/K}$ of $X/K$.\qed
\end{claim}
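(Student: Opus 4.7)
The plan is to reduce the existence of a homeomorphic lift to a purely $\pi_1$-statement: namely, that the inclusion $\Sigma \hookrightarrow X/K$ induces the trivial homomorphism on fundamental groups. Standard covering space theory then guarantees that this inclusion lifts to the universal cover $\widetilde{X/K}$, and for an inclusion the lift is automatically injective and a homeomorphism onto its image (since $\Sigma$ is compact, being closed in the compact manifold $X/K$, and $\widetilde{X/K}$ is Hausdorff).

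To establish the $\pi_1$-triviality I would invoke Corollary \ref{cor:X/G}, which identifies $X/K$ as a homotopy $n$-torus. This gives $\pi_1(X/K) \cong \Z^n$, and the Hurewicz map realizes the natural map $\pi_1(X/K) \to H_1(X/K;\R)$ as the standard inclusion $\Z^n \hookrightarrow \R^n$; in particular it is injective. By naturality of the Hurewicz homomorphism, one then has the commutative square
$$
\xymatrix{
\pi_1(\Sigma) \ar[r] \ar[d] & \pi_1(X/K) \ar[d] \\
H_1(\Sigma;\R) \ar[r] & H_1(X/K;\R)
}
$$
in which the right vertical arrow is injective and the bottom arrow vanishes by Claim \ref{homology-trivial}. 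A diagram chase forces the top arrow to vanish as well, which is exactly the required $\pi_1$-triviality.

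The remaining step is to upgrade the existence of a lift to a homeomorphism. Since the inclusion-induced map $\pi_1(\Sigma) \to \pi_1(X/K)$ is zero, the preimage of $\Sigma$ under the universal covering projection $p\colon \widetilde{X/K}\to X/K$ splits as a disjoint union of sheets, each mapped bijectively onto $\Sigma$ by $p$; compactness of $\Sigma$ and Hausdorffness of $\widetilde{X/K}$ promote each such bijection to a homeomorphism. There is no serious obstacle here: the two nontrivial inputs, Corollary \ref{cor:X/G} and Claim \ref{homology-trivial}, have already been proved, and the remainder is a textbook application of the lifting criterion for covering spaces. The only point where one must pay attention is the comparison between $\pi_1$ and $H_1$, which is rigid precisely because $X/K$ is aspherical with torsion-free first homology.
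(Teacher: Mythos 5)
Your argument is essentially the same as the paper's: both deduce from Corollary \ref{cor:X/G} that $\pi_1(X/K)\to H_1(X/K;\R)$ is injective, combine this via the naturality square with Claim \ref{homology-trivial} to get $\pi_1(\Sigma)\to\pi_1(X/K)$ trivial, and then apply the lifting criterion. You merely spell out a bit more carefully why the resulting lift is a homeomorphism (compactness of $\Sigma$ plus Hausdorffness of $\widetilde{X/K}$), which the paper leaves implicit.
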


Let now $\Sigma\subset\Fix(\sigma)$ be a connected component and lift it to $\tilde\Sigma\subset\widetilde{X/K}$. Observe that since $X/K$ is covered by $\bT^n=X/H$, we can identify $\widetilde{X/K}$ with the euclidean space. Choosing a point $\tilde x\in\tilde\Sigma$ there is a unique lift $\tilde\sigma$ of $\sigma$ to $\widetilde{X/K}$ with $\tilde\sigma(\tilde x)=\tilde x$. Observe that $\tilde\sigma$ is an involution and that 
$$\tilde\Sigma\subset\Fix(\tilde\sigma).$$
The involution $\tilde\sigma$ is obviously a transformation of the contractible space $\widetilde{X/K}$ of prime period $2$. It is a standard consequence of Smith theory \cite[III. Theorem 5.2]{Bredon} that $\Fix(\tilde\sigma)$ is $\Z_2$-acyclic and hence connected. This proves that $\Fix(\tilde\sigma)=\tilde\Sigma$. 

On the other hand, for $n\ge 3$, $\tilde\Sigma$ is homeomorphic to $\Sigma$ and hence homeomorphic to an orientable cohomology manifold that is not $\Z_2$-acyclic by Proposition \ref{fix-set}; this is a contradiction. For $n=2$, $\Fix(\sigma)$ is a discrete set. Thus \cite[Theorem III.4.3]{Bredon} gives a contradiction.

This concludes the proof of Proposition \ref{final-prop}.
\end{proof}

\section{Theorem \ref{no-sudoku}}\label{sec:no-sudoku}
We discuss now briefly Theorem \ref{no-sudoku}.

\begin{named}{Theorem \ref{no-sudoku}}
There is no branched cover from the 4-dimensional torus $\bT^4$ to $\#^3(\bS^2\times\bS^2)$, the connected sum of three copies of $\bS^2\times\bS^2$. On the other hand, there are, a fortiori $\pi_1$-surjective, maps $\bT^4\to \#^3(\bS^2\times\bS^2)$ of arbitrarily large degree.
\end{named}

The claim that there is no branched covering $\bT^4\to\#^3(\bS^2\times\bS^2)$ follows directly from Theorem \ref{thm:main}. Maps of arbitrarily large degree 
$$f:\bT^4\to\#^3(\bS^2\times\bS^2)$$
can either be constructed directly, or be shown to exist using the work of Duan and Wang \cite{Duan-Wang}. These authors prove namely the following remarkable theorem:

\begin{theorem*}[Duan-Wang]
Suppose $M$ and $L$ are closed oriented $4$-manifolds with intersection matrices $A$ and $B$ under given bases $\alpha$ for $H^2(M;\Z)$ and $\beta$ for $H^2(L;\Z)$, respectively. If $L$ is simply connected, then there is a map $f:M\to L$ of degree $k$ such that $f^*( \beta) = \alpha P$ if and only if $P^tAP=kB$.
\end{theorem*}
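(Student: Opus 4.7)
The proof naturally separates into the non-existence half, which invokes Theorem~\ref{thm:main}, and the existence half, which invokes the theorem of Duan and Wang stated above.

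For the non-existence of branched covers, the plan is to verify the cohomological hypothesis of Theorem~\ref{thm:main} with $r=2$. By the connected sum formula,
$\dim H^2(\#^3(\bS^2\times\bS^2);\R)=6=\binom{4}{2}=\dim H^2(\bT^4;\R)$, so the hypothesis holds. Theorem~\ref{thm:main} then forces any branched cover $f\colon\bT^4\to\#^3(\bS^2\times\bS^2)$ to actually be a covering map. Since $\#^3(\bS^2\times\bS^2)$ is simply connected while $\pi_1(\bT^4)=\Z^4$, and since covering maps between connected spaces induce injections on fundamental groups, no such covering can exist. This finishes the first half.

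For the existence of maps of arbitrarily large degree, I first identify the intersection forms on $H^2(\cdot;\Z)$ of both manifolds. The form on $\#^3(\bS^2\times\bS^2)$ is the standard $3H$, where $H=\bigl(\begin{smallmatrix}0&1\\1&0\end{smallmatrix}\bigr)$ is the hyperbolic form. For $\bT^4$, a direct computation of wedge products in the basis $\{dx_i\wedge dx_j\}_{i<j}$ of $H^2(\bT^4;\Z)\cong\Lambda^2\Z^4$, after a suitable reordering and sign change of basis vectors, also produces $3H$ (alternatively, this is forced by the classification of even unimodular indefinite forms of rank six and signature zero).

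With $A=B=3H$, Duan--Wang reduces the task to exhibiting, for each positive integer $k$, an integer matrix $P$ satisfying $P^tAP=kA$. I will take $P$ to be block-diagonal with three copies of $P_k=\bigl(\begin{smallmatrix}k&0\\0&1\end{smallmatrix}\bigr)$ along the diagonal; the one-line computation $P_k^tHP_k=kH$ then yields $P^tAP=kA$. This produces maps of every positive degree $k$, and $\pi_1$-surjectivity is automatic since the target is simply connected. The proof is essentially an assembly of the heavy machinery already in place; the only mildly non-obvious checks are pinning down the exact form of the intersection pairing on $\bT^4$ and invoking injectivity of $f_*$ on $\pi_1$ to rule out the remaining covering possibility.
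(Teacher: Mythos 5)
You have not proved the statement actually quoted here (the Duan--Wang theorem); you use it as a black box. But the paper does exactly the same: that theorem is imported from \cite{Duan-Wang} without proof, and the only material surrounding it is the deduction of Theorem \ref{no-sudoku}, which is what your proposal establishes. Against that yardstick your argument is correct and essentially identical to the paper's: non-existence via Theorem \ref{thm:main} with $r=2$ (both $H^2$'s have dimension $6$) plus $\pi_1$-injectivity of covering maps, and existence by identifying both intersection forms with $\oplus^3 H$, $H=\bigl(\begin{smallmatrix}0&1\\1&0\end{smallmatrix}\bigr)$, and feeding a suitable $P$ into Duan--Wang. The only difference is the choice of $P$: you take $\oplus^3\bigl(\begin{smallmatrix}k&0\\0&1\end{smallmatrix}\bigr)$, the paper takes $\oplus^3\bigl(\begin{smallmatrix}0&k\\1&0\end{smallmatrix}\bigr)$; both satisfy $P^tHP=kH$ blockwise, so this is immaterial.
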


In \cite[Example 4]{Duan-Wang}, this theorem is applied in the particular case of maps $\bT^4\to \#^3(\bS^2\times \bS^2)$ to obtain mappings of every degree. We finish this section by recalling this example. Intersection forms of $H^2(\bT^2)$ and $H^2(\#^3(\bS^2\times \bS^2))$ in natural bases read as
\[
\oplus^3\left( \begin{array}{cc}
0 & 1 \\
1 & 0 
\end{array}\right).
\]
Thus the linear mapping, given by the matrix
\[
P= \oplus^3 \left( \begin{array}{cc}
0 & k \\
1 & 0 
\end{array}\right),
\]
yields the required result.


\bigskip

\noindent{\small Matematiikan ja tilastotieteen laitos, Helsingin yliopisto, Helsinki \newline \noindent
\texttt{pekka.pankka@helsinki.fi}}
\medskip

\noindent{\small Department of Mathematics, University of Michigan, Ann Arbor \newline \noindent
\texttt{jsouto@umich.edu}}

\end{document}